\DeclareMathAlphabet{\mathbbold}{U}{bbold}{m}{n} %New alphabet for bold numbers.
\newcommand{\R}{\mathbb{R}}
\newcommand{\N}{\mathbb{N}}
\newcommand{\eps}{\varepsilon}
\newcommand{\di}{\mathsf d}
\newcommand{\X}{{\rm X}}
\newcommand{\de}{\mathrm{d}}
\newcommand{\rest}[2]{{{\rm restr}_{#1}^{#2}}}
\DeclareMathOperator{\Geo}{Geo}
\newcommand{\p}{\mathtt p}
\newcommand{\Prob}{\mathscr{P}}
\newcommand{\optgeo}{\mathrm{OptGeo}}
\newcommand{\opt}{\mathrm{Opt}}
\newcommand{\spt}{\mathrm{spt}}
\newcommand{\diam}{\mathrm{diam}}
\newcommand{\cd}{{\sf CD}}
\newcommand{\mcp}{{\sf MCP}}
\newcommand{\sbm}{{\sf SBM}}
\newcommand{\bm}{{\sf BM}}
\newcommand{\m}{\mathfrak{m}}
\newcommand{\E}{\mathcal{E}}
\definecolor{myblue}{RGB}{18,10,170}
\definecolor{myred}{RGB}{230,31,27}
\definecolor{mygreen}{RGB}{31,139,34}
\theoremstyle{plain}
\newtheorem{thm}{Theorem}[section]
\newtheorem{cor}[thm]{Corollary}
\newtheorem{lem}[thm]{Lemma}
\newtheorem*{lem*}{Lemma}
\newtheorem{prop}[thm]{Proposition}
\theoremstyle{definition}
\newtheorem{defn}[thm]{Definition}
\theoremstyle{remark}
\newtheorem{rmk}[thm]{Remark}
\newtheorem*{notation}{Notation}
\newcommand{\sM}{\mathscr M}
\author[M. Magnabosco]{Mattia Magnabosco}
\author[L. Portinale]{Lorenzo Portinale}
\author[T. Rossi]{Tommaso Rossi}
\address{Institut f\"ur Angewandte Mathematik, Universit\"at Bonn, Bonn, Germany}
\email{\href{mailto:magnabosco@iam.uni-bonn.de}{magnabosco@iam.uni-bonn.de}}
\email{\href{mailto:rossi@iam.uni-bonn.de}{rossi@iam.uni-bonn.de}}
\email{\href{mailto:portinale@iam.uni-bonn.de}{portinale@iam.uni-bonn.de}}
\title[The strong {B}runn--{M}inkowski inequality implies the {$\cd$} condition]{The strong {B}runn--{M}inkowski inequality and its equivalence with the  {$\cd$} condition}
\date{\today}
\begin{document}

\begin{abstract}
In the setting of essentially non-branching metric measure spaces, we prove the equivalence between the curvature dimension condition $\cd(K,N)$, in the sense of Lott--Sturm--Villani \cite{MR2237206,MR2237207,MR2480619}, and a newly introduced notion that we call strong Brunn--Minkowski inequality $\sbm(K,N)$.
This condition is a reinforcement of the generalized Brunn--Minkowski inequality $\bm(K,N)$, which is known to hold in $\cd(K,N)$ spaces. Our result is a first step towards providing a full equivalence between the $\cd(K,N)$ condition and the validity of $\bm(K,N)$, which has been recently proved in \cite{seminalpaper} in the framework of weighted Riemannian manifolds.
\end{abstract}

\maketitle

% \tableofcontents

\section{Introduction}
In their seminal papers \cite{MR2237206,MR2237207,MR2480619}, Lott--Sturm--Villani introduced a synthetic notion of
curvature dimension bounds, in the non-smooth setting of metric measure spaces, usually denoted by $\cd(K,N)$, with $K\in\R$, $N\in (1,\infty]$. This property is formulated using the theory of \emph{optimal transport} and, in the setting of Riemannian manifolds, is equivalent to having Ricci curvature bounded below and dimension bounded above. More precisely, the $\cd(K,N)$ condition consists in a convexity property of the so-called R\'enyi entropy functional along Wasserstein geodesics. 

In the Riemannian setting, curvature bounds are sufficient to deduce many geometric and functional inequalities. Similar results can be obtained in the framework of a metric measure space $(\X,\di,\m)$ as a consequence of the curvature-dimension condition. A celebrated geometric inequality that can be deduced from the $\cd(K,N)$ condition, in a generalized form, is the \emph{Brunn--Minkowski inequality}: the convexity of the R\'enyi entropy translates to a concavity property of the mass of the $t$-midpoints, namely of the function
\begin{equation}
   [0,1]\ni t\mapsto \m(M_t(A,B))^{\frac1N},\qquad A,B\subset\X\ \text{Borel sets}, 
  \end{equation}
see Definition \ref{def:BRUNOOO}. This was already observed in the first papers on the $\cd(K,N)$ condition, see in particular \cite[Proposition 2.1]{MR2237207}.
On the one hand, a remarkable feature of the Brunn--Minkowski inequality is that its formulation does not invoke optimal transport. On the other hand, its proof relies on the well-known inclusion of the support $D_t(A,B)$ of the (unique) Wasserstein $t$-midpoint, between the normalized uniform distributions on $A$ and $B$, in the set of $t$-midpoints $M_t(A,B)$. Therefore, a natural strengthening of the Brunn--Minkowski inequality is to require that the aforementioned concavity property holds, not for the whole set of $t$-midpoints, but only for the support of the Wasserstein $t$-midpoint. This leads to main novelty of this paper: the introduction of the \emph{strong Brunn--Minkowski inequality}, which we denote by $\sbm(K,N)$, cf. Definition~\ref{def:forte_bruno}. Despite being still dependent on the optimal transport, this inequality is reminiscent of the Brunn--Minkowski one. Our main result is that $\sbm(K,N)$ is equivalent to $\cd(K,N)$, in the setting of essentially non-branching metric measure spaces. We refer to Sections \ref{sec:prelim} and \ref{sec:cd_cond_bruno} for the precise definitions. 

\begin{thm}
\label{thm:SBM_imnplies_CD}
Let $K\in\R$ and $N>1$ and let $(\X,\di,\m)$ be an essentially non-branching metric measure space supporting $\sbm(K,N)$. Then, $(\X,\di,\m)$ is a $\cd(K,N)$ space. In particular, 
\begin{center}
    $(\X,\di,\m)$ supports $\sbm(K,N)$ if and only if it satisfies $\cd(K,N)$.
\end{center}
\end{thm}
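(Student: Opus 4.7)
The plan is to derive, from $\sbm(K,N)$, the pointwise density inequality along the optimal dynamical plan, which in the essentially non-branching setting is an equivalent reformulation of $\cd(K,N)$ (after Rajala--Sturm and Cavalletti--Mondino). Concretely, given $\mu_0=\rho_0\m$ and $\mu_1=\rho_1\m$ in $\Prob(\X)$ with bounded supports and bounded densities, let $\nu$ be the unique element of $\optgeo(\mu_0,\mu_1)$ and $\mu_t=(e_t)_\#\nu=\rho_t\m$. The goal is to show that for $\nu$-a.e.\ $\gamma$,
\[
\rho_t(\gamma_t)^{-1/N}\geq \tau_{K,N}^{(1-t)}(\di(\gamma_0,\gamma_1))\,\rho_0(\gamma_0)^{-1/N}+\tau_{K,N}^{(t)}(\di(\gamma_0,\gamma_1))\,\rho_1(\gamma_1)^{-1/N}.
\]
Integrating against $\nu$ yields the $(K,N)$-convexity of the R\'enyi entropy and hence $\cd(K,N)$.

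First, I would perform the standard reduction to $\mu_0,\mu_1$ absolutely continuous with compactly supported, bounded densities. Essential non-branching then guarantees both the uniqueness of the optimal coupling $\pi$ and of $\nu$, and the absolute continuity of $\mu_t$ for all $t\in[0,1)$; it also grants uniqueness of the Wasserstein midpoint between uniform distributions on suitable Borel sets, so that $D_t(A,B)$ is well-posed for the cells that enter the argument.

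The heart of the proof is a localization using $\sbm$. For $n\geq 1$, partition $\spt\mu_0$ and $\spt\mu_1$ into Borel cells $\{A_i\},\{B_j\}$ of diameter at most $1/n$ and extract a matching $i\mapsto j(i)$ so that, up to mass $o(1)$ as $n\to\infty$, the coupling $\pi$ is concentrated on $\bigcup_i A_i\times B_{j(i)}$. Applying $\sbm(K,N)$ to $A_i$ and $B_{j(i)}$ gives
\[
\m(D_t(A_i,B_{j(i)}))^{1/N}\geq \tau_{K,N}^{(1-t)}(\Theta_i)\,\m(A_i)^{1/N}+\tau_{K,N}^{(t)}(\Theta_i)\,\m(B_{j(i)})^{1/N},
\]
with $\Theta_i\approx \di(\gamma_0,\gamma_1)$ for $\gamma$ in the corresponding portion of $\spt\nu$. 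Two key geometric facts then enter: (a) essential non-branching forces the family $\{D_t(A_i,B_{j(i)})\}_i$ to be $\m$-essentially disjoint, since a common point would lie on two distinct minimizing geodesics coming from incompatible matchings, contradicting uniqueness of $\nu$; and (b) the total mass that $\mu_t$ deposits on $D_t(A_i,B_{j(i)})$ is $\pi(A_i\times B_{j(i)})$. Combining these with Jensen's inequality applied on each $D_t(A_i,B_{j(i)})$ yields a discrete lower bound on the R\'enyi entropy of $\mu_t$ in terms of the $\sbm$ estimates above.

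Letting $n\to\infty$, the discrete bound upgrades to the desired pointwise density inequality along $\nu$. The main obstacle, I expect, is two-fold: (i) constructing the matching $i\mapsto j(i)$ and rigorously deducing the essential disjointness of $\{D_t(A_i,B_{j(i)})\}_i$ from essential non-branching, which requires identifying the optimal plan between $\m_{A_i}/\m(A_i)$ and $\m_{B_{j(i)}}/\m(B_{j(i)})$ with an appropriate piece of the global $\nu$ --- a delicate issue because $\sbm$ constrains the midpoint of \emph{uniform} distributions, not of arbitrary restrictions of $\mu_0,\mu_1$; and (ii) securing uniform control on the distortion coefficients $\tau_{K,N}^{(t)}(\Theta_i)$ as $n\to\infty$, needed to exchange the limit with the pointwise quantitative bound, particularly in the regimes $K<0$ or $\di(\gamma_0,\gamma_1)$ close to the relevant diameter threshold.
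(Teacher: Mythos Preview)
Your overall architecture---partition the marginals into small cells, apply $\sbm$ on each matched pair, use essential non-branching for disjointness of the $D_t$-sets, and pass to the limit---is essentially the one the paper follows. However, there is a genuine gap, and it is precisely your obstacle (i).

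The difficulty you flag is real and, as stated, your argument does not close it. The $\sbm$ inequality controls the support of the Wasserstein midpoint between the \emph{uniform} measures $\m_{A_i}$ and $\m_{B_{j(i)}}$. Unless $\mu_0|_{A_i}$ and $\mu_1|_{B_{j(i)}}$ are themselves constant multiples of $\m|_{A_i}$ and $\m|_{B_{j(i)}}$, the optimal geodesic plan between $\m_{A_i}$ and $\m_{B_{j(i)}}$ is \emph{not} the restriction of $\nu$ to geodesics with endpoints in $A_i\times B_{j(i)}$. Consequently your fact (b), that $\mu_t$ deposits mass $\pi(A_i\times B_{j(i)})$ on $D_t(A_i,B_{j(i)})$, is unjustified: $D_t(A_i,B_{j(i)})$ need not coincide with $e_t\big(\spt\nu\cap e_0^{-1}(A_i)\cap e_1^{-1}(B_{j(i)})\big)$, and the Jensen step on each cell breaks down. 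The paper resolves this by first reducing to \emph{step measures}, i.e.\ marginals whose densities are locally constant; then the cells $P_j^\varepsilon$ can be chosen inside a single level set of $\rho_0$ and mapped by the optimal transport map $T$ inside a single level set of $\rho_1$, so that the restricted marginals \emph{are} uniform and the piece of $\nu$ genuinely is the unique element of $\optgeo(\m_{P_j^\varepsilon},\m_{T(P_j^\varepsilon)})$. One then builds an auxiliary approximate $t$-midpoint $\tilde\mu_t^\varepsilon$ (a sum of uniform measures on the $D_t$-sets), bounds its entropy via $\sbm$, and shows $\tilde\mu_t^\varepsilon\to\mu_t$ in $W_2$; lower semicontinuity of the R\'enyi entropy closes the argument. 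A separate approximation (step measures $\to$ bounded measures $\to$ all of $\Prob^{ac}$) finishes the proof.

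Two further remarks. First, your ``matching $i\mapsto j(i)$'' is not available in general: a cell $A_i$ is typically coupled to several $B_j$'s. The paper sidesteps this by partitioning only $\spt\mu_0$ and letting the second partition be $T(P_j^\varepsilon)$, after invoking Lusin to make $T$ continuous on a set of nearly full measure (this also delivers the diameter control on $T(P_j^\varepsilon)$). Second, the paper targets the integrated entropy inequality \eqref{eq:CDcond} directly rather than the pointwise density inequality; the latter is not needed, and aiming for it adds an extra layer that the discrete Jensen bound does not obviously yield in the limit.
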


This theorem is a first step towards the complete equivalence between
the Brunn--Minkowski inequality and the curvature dimension condition, in the setting of essentially non-branching spaces. The interest in the aforementioned equivalence is twofold: on one side, it would provide a characterization of the curvature dimension condition \emph{without} the need of optimal transport. On the other side, it would provide an alternative proof of the \emph{globalization theorem}, cf. \cite{MR4309491}. Indeed, according to \cite[Theorem~1.2]{MR3608721}, the local curvature dimension condition, denoted by $\cd_{\mathrm{loc}}(K,N)$, is enough to the deduce the (global) Brunn--Minkowski inequality with sharp coefficients. Note that, should $\cd_{\mathrm{loc}}(K,N)$ imply $\sbm(K,N)$, Theorem~\ref{thm:SBM_imnplies_CD} would already be enough to deduce the globalization theorem. However, this implication does not easily follow from the arguments in \cite[Theorem~1.2]{MR3608721}. In fact, their technique is based on a suitable localization argument, built upon the $L^1$-optimal transport and thus it does not immediately imply the validity of the strong Brunn--Minkowski inequality, which on the contrary is based on the $L^2$-optimal transport.

In \cite{seminalpaper}, we prove the equivalence between the Brunn--Minkowski inequality and the curvature dimension condition, in the setting of weighted Riemannian manifolds. In this framework, using the \emph{full} Riemann curvature tensor, we are able to identify pairs of sets $A$ and $B$, for which the sets $M_t(A,B)$ and $D_t(A,B)$ are comparable in measure. In the setting of essentially non-branching metric measure spaces, the relation between $D_t(A,B)$ and $M_t(A,B)$ is in general less clear. A better understanding of this problem is most likely required to close the gap between the Brunn--Minkowski inequality and the $\cd(K,N)$ condition, cf. Section \ref{sec:Dt_VS_Mt_emph} for a more detailed discussion.

\subsection*{Organization of the paper} In Section \ref{sec:prelim}, we introduce the basic tools of metric measure spaces and optimal transport. In Section \ref{sec:cd_cond_bruno}, we recall the definition of a $\cd(K,N)$ space and we prove an equivalence criterion, which we believe is of independent interest, cf. Proposition \ref{prop:toprovecd}. Finally, we introduce the strong Brunn--Minkowski inequality $\sbm(K,N)$, cf. Definition \ref{def:forte_bruno}, and we show its interplay with the Brunn--Minkowski inequality. In Section \ref{sec:main_theorem}, we prove Theorem \ref{thm:SBM_imnplies_CD}: the most challenging step is to find a large class of measures for which $\sbm(K,N)$ implies $\cd(K,N)$, cf. Theorem \ref{thm:cdwithstep}. At last, in Section \ref{sec:finndelany_grandeacquisto}, we show that the Brunn--Minkowski inequality implies the so-called measure contraction property, cf. Proposition \ref{prop:capitantadda}, and we add some final remarks.

\subsection*{Acknowledgments} 
% M.M. and T.R. European Research Council (ERC) under the program ERC-AdG RicciBounds, grant agreement No. 694405. 
L.P. acknowledges support from the Hausdorff Center for Mathematics in
Bonn. T.R. acknowledges support from the Deutsche Forschungsgemeinschaft (DFG, German Research Foundation) through the collaborative research centre ``The mathematics of emerging effects'' (CRC 
1060, Project-ID 211504053). The authors are grateful to Prof. Karl-Theodor Sturm for fruitful discussions on the topic.

\section{Preliminaries}
\label{sec:prelim}
A metric measure space is a triple $(\X,\mathsf{d},\mathfrak{m})$, where $(\X,\mathsf{d})$ is a Polish metric space (i.e.\ complete and separable) and $\mathfrak{m}$ is a positive Borel measure on $\X$, finite on bounded sets. We call $\mathscr M_+ (\X)$ the set of all positive and finite Borel measures on a metric space $(\X,\di)$.
We denote by $\mathscr{P}(\X)\subset \mathscr M_+ (\X)$ the set of all Borel probability measures on $(\X,\di)$, and by $\mathscr{P}_2(\X) \subset \mathscr{P}(\X)$ the set of all probability measures with finite second moment. Moreover, the set of probability measures in $\Prob_2(\X)$, which are absolutely continuous with respect to the reference measure $\m$, will be denoted by $\Prob^{ac}(\X,\m)$. On the space $\mathscr{P}_2(\X)$, we introduce the 2-Wasserstein distance
\begin{equation}\label{eq:W2}
    W_2^2(\mu,\nu):=\inf_{\pi} \int_{\X\times \X}\di(x,y)^2 \, \de\pi(x,y),
\end{equation}
where the infimum is taken over all \textit{admissible plans}, that is over all $\pi\in \Prob(\X \times \X)$ such that $(\p_1)_\# \pi= \mu$ and $(\p_2)_\# \pi= \nu$. Here $\p_i: \X \times \X \to \X$ denotes the projection on the $i$-th factor.
The infimum in \eqref{eq:W2} is always attained, the admissible plans realizing it are called \textit{optimal transport plans} and the set that contains all of them is denoted by $\opt(\mu,\nu)$. The optimality of a transport plan can be equivalently characterized with the notion of $\di^2$-cyclical monotonicity (see \cite[Definizione 3.10]{MR4294651} for the definition). In particular, an admissible plan $\pi\in \Prob(\X \times \X)$ is optimal between its marginals if and only if it is concentrated on a $\di^2$-cyclically monotone ($\sigma$-compact) set $\Gamma\in\X\times\X$, cf. \cite[Theorem 4.2]{MR4294651}.   

It is well known that $W_2$ is a complete and separable distance on $\Prob_2(\X)$. Moreover, the convergence with respect to the Wasserstein distance is characterized in the following way, cf. \cite[Theorem~8.8]{MR4294651}:
\begin{equation*}
    \mu_n \overset{W_2}{\longrightarrow}\mu \quad \iff \quad \mu_n \rightharpoonup \mu\, \text{ and } \int \di(x_0,x)^2 \de \mu_n \to \int \di(x_0,x)^2 \de \mu \quad \forall x_0\in \X.
\end{equation*}
In the last formula the symbol $\rightharpoonup$ denotes the weak
convergence of measures, i.e. the one in duality with the space of continuous and bounded functions $C_b(\X)$. This description shows in particular that, for a sequence of probability measures with uniformly bounded support, the $W_2$-convergence is equivalent to the weak one. Moreover, as a consequence of Riesz and Banach--Alaoglu theorems, if $(\X,\di)$ is compact then $\Prob(\X)= \Prob_2(\X)$ is compact as well, with respect to the Wasserstein distance $W_2$. For the same reason, every family of measures in $\Prob_2(\X)$, having the same compact support, will be $W_2$-precompact (even if $\X$ is not compact).

Let $C([0, 1], \X)$ be the set of continuous functions from $[0,1]$ to $\X$ and define the $t$-\textit{evaluation map} as $e_t:C([0,1],\X) \to \X$; $e_t(\gamma):= \gamma(t)$, for $\gamma \in C([0,1],\X)$, for any $t\in [0,1]$. A curve $\gamma\in C([0, 1], \X)$ is called \textit{geodesic} if 
\begin{equation}
    \di(\gamma(s), \gamma(t)) = |t-s| \cdot \di(\gamma(0), \gamma(1)) \quad \text{for every }s,t\in[0,1]\,.
\end{equation}
We will denote by $\Geo(\X)$ the space of constant speed geodesics in $(\X,\di)$ parametrized on $[0,1]$. The metric space $(\X,\di)$ is said to be \emph{geodesic} if every pair of points is connected by a geodesic,  Notice that every measure $\eta\in\Prob(C([0,1],\X))$ induces the curve  $[0,1]\ni t \mapsto \mu_t=(e_t)_\# \eta$ in the space of probability measures $\Prob(\X)$. If $(\X,\di)$ is a geodesic metric space then $(\Prob_2(\X),W_2)$ is a geodesic metric space as well. More precisely, given two measures $\mu,\nu\in \Prob_2(\X)$, the curve $\{\mu_t\}_{t\in [0,1]} \subset \Prob_2(\X)$ connecting $\mu$ and $\nu$ is a Wasserstein geodesic if and only if there exists $\eta\in\Prob(C([0,1],\X))$ inducing $\{\mu_t\}_{t\in [0,1]}$ (that is $\mu_t=(e_t)_\# \eta$ for every $t\in [0,1]$), which is concentrated on $\Geo(\X)$ and satisfies $(e_0,e_1)_\#\eta\in\opt(\mu,\nu)$. In this case it is said that $\eta$ is an \textit{optimal geodesic plan} between $\mu$ and $\nu$ and this will be denoted as $\eta\in\optgeo(\mu,\nu)$.

\begin{defn}[Essentially non-branching metric space]
In a metric space $(\X,\di)$, a subset $G\subset \Geo(\X)$ is called \emph{non-branching} if for any pair of geodesics $\gamma_1,\gamma_2\in G$ such that $\gamma_1\neq\gamma_2$, it holds that 
 \begin{equation}
     \rest{0}{t} \gamma_1 \neq  \rest{0}{t} \gamma_1 \qquad \text{for every }t\in (0,1).
 \end{equation}
 A metric measure space $(\X,\mathsf{d},\mathfrak{m})$ is said to be \textit{essentially non-branching} if for every absolutely continuous measures $\mu_0,\mu_1\in\Prob^{ac}(\X,\m)$, every optimal geodesic plan $\eta$ connecting them is concentrated on a non-branching set of geodesics.
\end{defn}

%In the following we will always assume the reference metric space $(\X,\di)$ to be a locally compact length space. As a consequence of the metric version of Hopf--Rinow theorem (see for example \cite[Theorem 2.3]{MR1377265}), the metric space $(\X,\di)$ will also be proper and, in particular $\sigma$-compact. Then, the reference measure $\m$ will be Radon, being a locally finite Borel measure on a locally compact and $\sigma$-compact metric space. \todo{spostare questa roba}

\begin{defn}[Midpoint set]
Let $(X,\di)$ be a metric space. For every $t\in [0,1]$ and any pair of sets $A,B\subset \X$ we define the set of $t$-midpoints between $A$ and $B$ as
\begin{equation*}
    M_t(A,B):= e_t \big( \{\gamma \in \Geo(\X) \, :\, \gamma(0) \in A, \, \gamma(1) \in B\}\big)
\end{equation*}
%\begin{equation*}  M_t(A,B)= \{x\in \X : \di(a,x)=t \cdot \di (a,b), \di (b,x)= (1-t)\cdot \di(a,b), \text{ for some }a\in A, b\in B\} \end{equation*}
 We will adopt the notation $M_t(x,A):=M_t(\{x\},A)$ and $M_t(A,x):=M_t(A,\{x\})$ for every $x\in \X$ and $A\subset \X$. 
 \end{defn}

Observe that in general the set $M_t(A,B)$ is not Borel measurable, even if the sets $A$ and $B$ are Borel. In the following we will need to evaluate the measure of $M_t(A,B)$, for this reason we introduce the measure $\bar \m$ as the outer measure associated to $\m$. This measure will not play a significant role and will only be used when dealing with sets of the form $M_t(A,B)$. In particular, having a control on the measure of some suitable sets of $t$-midpoints, is sufficient to deduce some nice properties regarding the structure of $W_2$-geodesics, as shown in Proposition \ref{prop:Kell}.

\begin{defn}{\cite[Definition 5.1]{MR3709114}}
\label{def:qualitatively_non_deg}
A measure $\m$ on a metric space $(\X,\di)$ is said to be \textit{qualitatively non-degenerate} if for every $R>0$ and $\bar x \in \X$ there exists a function $f_{R, \bar x}:(0,1)\to(0,\infty)$ with
\begin{equation*}
    \limsup _{t \to 0} f_{R, \bar x}(t)>\frac{1}{2},
\end{equation*}
such that for every $x\in \mathsf B_R(\bar x)$ and every Borel subset $A \subset \mathsf B_R(\bar x)$ 
\begin{equation*}
    \bar \m(M_t(A,x)) \geq f_{R,\bar x}(t) \cdot  \m(A).
\end{equation*}
\end{defn}

% When $A$ is compact, then the set $M_t(A,x)$ is compact as well, hence it is measurable. Probably Definition \ref{def:qualitatively_non_deg} can be formulated only using compact sets (thus replacing $\bar\m$ with $\m$), and the results of \cite{MR3709114} still hold.
%\todo{Controllare se tutto funziona con $\bar\m$ e aggiungere commento rispetto a \cite{MR3709114}. Vd parte commentata}

\begin{defn}{\cite[Definition 3.1]{MR3709114}}
\label{def:gtb_sip}
We say that a metric measure space $(\X,\di,\m)$ has the \emph{good transport behaviour} if, for
every pair $\mu_0,\mu_1\in \Prob_2(\X)$ with $\mu_0\ll\m$, any optimal transport plan between $\mu_0$ and $\mu_1$ is induced by a map. We say that $(\X,\di,\m)$ has the \emph{strong interpolation property} if for
every pair $\mu_0,\mu_1\in \Prob_2(\X)$ with $\mu_0\ll\m$, there exists a unique optimal geodesic plan $\eta \in \optgeo(\mu_0,\mu_1)$ and is induced by a map and such that $(e_t)_\# \eta \ll \m$ for every $t\in [0,1)$.
\end{defn}

Recall that a metric space $(\X, \di)$ is said to be \textit{proper} if every closed and bounded set is compact.

\begin{prop}{\cite[Theorem 5.8, Corollary 5.9]{MR3709114}}\label{prop:Kell}
Assume $(\X,\di,\m)$ is a proper, geodesic, essentially non-branching metric measure space and $\m$ is qualitatively non-degenerate. Then, $(\X,\di,\m)$ has both the good transport behaviour and the strong interpolation property.
\end{prop}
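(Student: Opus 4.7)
The plan is to establish three claims in sequence, from which both good transport behaviour and the strong interpolation property will follow.

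First, I would prove that for any $\mu_0\in\Prob^{ac}(\X,\m)$, any $\mu_1\in\Prob_2(\X)$, and any $\eta\in\optgeo(\mu_0,\mu_1)$, the interpolation $(e_t)_\#\eta$ is absolutely continuous with respect to $\m$ for every $t\in[0,1)$. This is the core estimate where qualitative non-degeneracy is really used. I would argue by contradiction: suppose $\m(N)=0$ and $(e_t)_\#\eta(N)>0$. By properness one can localize everything inside a ball $\mathsf B_R(\bar x)$, and after disintegrating $\eta$ with respect to $e_1$, for a positive measure set of endpoints $x$ the conditional plan $\eta^x$ satisfies $(e_t)_\#\eta^x(N)>0$. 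Combining qualitative non-degeneracy applied to the set of initial points of geodesics in $\eta^x$ with the essentially non-branching assumption (which forces the map $\gamma(0)\mapsto\gamma(t)$ to be essentially injective among geodesics sharing the endpoint $x$), the lower bound in Definition \ref{def:qualitatively_non_deg} can be run backwards from time $t$ towards $0$; the inequality $\limsup_{s\to 0}f_{R,\bar x}(s)>1/2$ then contradicts $\mu_0\ll\m$ together with $\m(N)=0$ on a well-chosen small time interval.

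Second, I would deduce uniqueness of the optimal geodesic plan. If $\eta_1,\eta_2\in\optgeo(\mu_0,\mu_1)$ were distinct, their average $\eta:=\tfrac12(\eta_1+\eta_2)$ would also be an optimal geodesic plan. For every $s\in(0,1)$, the restricted plan $(\rest{0}{s})_\#\eta$ has marginals $\mu_0$ and $(e_s)_\#\eta$, both absolutely continuous by the first step, so the essentially non-branching hypothesis applies and forces $(\rest{0}{s})_\#\eta$ to be concentrated on a non-branching set of geodesics. Letting $s\to 1$ and using continuity of geodesics, one concludes that $\eta$ itself lives on a non-branching set, contradicting the existence of two distinct optimal plans with the same marginals (which would necessarily share an initial segment and then bifurcate at some intermediate time).

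Third, the unique optimal geodesic plan must be induced by a Borel map $\X\to\Geo(\X)$: otherwise a standard measurable selection and swap argument would produce a second optimal geodesic plan with the same endpoints, contradicting the uniqueness just established. Projecting via $(e_0,e_1)$ then yields that any optimal transport plan between $\mu_0$ and $\mu_1$ is induced by a map, establishing the good transport behaviour; the strong interpolation property then follows directly by combining uniqueness with the absolute continuity of midpoints from the first step.

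The main obstacle is the first step. The qualitative non-degeneracy assumption only provides information about midpoints between a set and a \emph{single point}, whereas one needs to control midpoints of the whole plan $\eta$; bridging this gap requires both the careful disintegration with respect to $e_1$ and the essentially non-branching property to ensure sufficient injectivity of the geodesic flow on conditional plans, and a subsequent iteration in time to extend the absolute continuity from a small initial interval to the full range $[0,1)$.
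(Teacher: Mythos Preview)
The paper does not prove this proposition at all: it is quoted verbatim as \cite[Theorem~5.8, Corollary~5.9]{MR3709114} and used as a black box, so there is no ``paper's own proof'' to compare against. Your outline is a plausible reconstruction of the strategy in Kell's paper (and of closely related arguments by Cavalletti--Huesmann and Rajala--Sturm), but since the present paper simply imports the result, any proof you write is going beyond what the authors do.

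That said, a couple of points in your sketch are loose enough to be worth flagging. In Step~1, the phrase ``run backwards from time $t$ towards $0$'' hides the real content: Kell's argument does not literally disintegrate on $e_1$ and apply non-degeneracy pointwise, but rather compares $\m((e_0)_\#\eta(\cdot))$ and $\m((e_t)_\#\eta(\cdot))$ on well-chosen subfamilies of geodesics and iterates a halving-type estimate; the $\limsup>1/2$ condition is exactly what makes this iteration close. In Step~2, your contradiction is not quite the right one: two distinct optimal geodesic plans need not contain geodesics that coincide on an initial segment and then bifurcate. The actual argument is that once the averaged plan $\eta$ is concentrated on a non-branching set and $(e_t)_\#\eta\ll\m$ for intermediate $t$, one shows $(e_0,e_t)_\#\eta$ is induced by a map; combined with non-branching this forces $\eta$ itself to be induced by a map from $\mu_0$, and a map-induced plan cannot be a nontrivial average. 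Your Step~3 swap argument is then essentially this same point, so Steps~2 and~3 should be merged rather than treated sequentially.
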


\section{The \texorpdfstring{$\cd$}{CD} condition and the Brunn-Minkowski inequality}
\label{sec:cd_cond_bruno}

Before going through the definition of the $\cd$ condition, we introduce the two object which are necessary to do it: the distortion coefficient and the R\'enyi entropy functional.
For every $K \in \R$ and $N >0$ we define 
\begin{equation}
    \sigma_{K,N}^{(t)} (\theta) := \left\{\begin{array}{ll}+\infty  &  N \pi^2\leq K \theta^2 \\ \frac{\sin \left(t \theta \sqrt{\frac{K}{N}}\right)}{\sin \left(\theta \sqrt{\frac{K}{N}}\right)} & 0<K \theta^2 < N \pi^2\\  t & K=0 \text { or } N=\infty \\ \frac{\sinh \left(t \theta \sqrt{\frac{-K}{N}}\right)}{\sinh \left(\theta \sqrt{\frac{-K}{N}}\right)} & K<0\end{array}\right. ,
\end{equation}
 while for $K \in \R$ and $N >1$ we introduce 
 \begin{equation}
     \tau_{K, N}^{(t)}(\theta):=t^{\frac{1}{N}} \sigma_{K, N-1}^{(t)}(\theta)^{1-\frac{1}{N}}.
 \end{equation}
 These coefficients have nice monotonicity properties, in particular for every fixed $t\in (0,1)$, $K \in \R$ and $N>1$ 
 \begin{equation}\label{eq:monotonicitytau}
     \text{the function }\,\,\theta \mapsto  \tau_{K, N}^{(t)}(\theta)  \,\,\text{ is } \left.\begin{array}{ll}\text{nondecreasing} &  \text{ if }K \geq 0 \\ \text{nonincreasing} & \text{ if }K<0\end{array}\right. .
 \end{equation}
The $N$-R\'enyi entropy functional on $\Prob_2(\X)$ is defined as 
\begin{equation}
    \mathcal{E}_N (\mu)=  -\int_{\X} \rho(x)^{1-\frac{1}{N}} \, \de\m(x) \qquad \forall \mu \in \Prob_2(\X)\,,
\end{equation}
 where $\rho$ is the density of the absolutely continuous part of $\mu$, with respect to $\m$. It is well known (see for instance \cite[Lemma 4.1]{MR2237206}) that the $N$-R\'enyi entropy is lower semicontinuous in $\big(\Prob_2(\X), W_2\big)$, if the reference measure $\m$ has finite total mass. In general, for metric measure spaces with possibly infinite mass, the lower semicontinuity holds for $W_2$-converging sequences of measures concentrated on the same bounded set.
 
\begin{defn}[$\cd(K,N)$ condition]
\label{def:cd_condition}
Given $K \in \R$ and $N >1$, a metric measure space $(\X,\di,\m)$ is said to satisfy the \emph{curvature dimension} condition $\cd(K,N)$ (or simply to be a $\cd(K,N)$ space) if for every pair of measures $\mu_0=\rho_0\m,\mu_1= \rho_1 \m \in \Prob^{ac}(\X,\m)$, there exists a $W_2$-geodesic $\eta \in \Prob(\Geo(\X))$ connecting them, such that $(e_t)_\# \eta=: \mu_t =\rho_t \m \ll \m$, for every $t\in [0,1]$, and the following inequality holds for every $N'\geq N$ and every $t \in [0,1]$:
\begin{equation}\label{eq:CDcond}
    \E_{N'}(\mu_t) \leq - \int_{\X \times \X} \Big[ \tau^{(1-t)}_{K,N'} \big(\di(x,y) \big) \rho_{0}(x)^{-\frac{1}{N'}} +    \tau^{(t)}_{K,N'} \big(\di(x,y) \big) \rho_{1}(y)^{-\frac{1}{N'}} \Big]   \de \pi(x,y),
\end{equation}
where $\pi= (e_0,e_1)_\# \eta \in \opt(\mu_0,\mu_1)$. 
\end{defn}

\begin{notation}
In the following, in order to ease the notation we will sometimes denote by $T^{(t)}_{K,N'}(\pi|\m)$ the right hand side of \eqref{eq:CDcond}, that is 
\begin{equation}
    T^{(t)}_{K,N'}(\pi|\m)= - \int_{\X \times \X} \Big[ \tau^{(1-t)}_{K,N'} \big(\di(x,y) \big) \rho_{0}(x)^{-\frac{1}{N'}} +    \tau^{(t)}_{K,N'} \big(\di(x,y) \big) \rho_{1}(y)^{-\frac{1}{N'}} \Big]   \de \pi(x,y).
\end{equation}
Notice that it is not necessary to explicit the dependence of the integral on the densities $\rho_0$ and $\rho_1$, because this information is already encoded in $\pi$ and $\m$, in fact $(\p_1)_\# \pi = \mu_0= \rho_0 \m$ and $(\p_2)_\# \pi = \mu_1= \rho_1 \m$.
\end{notation} 

We now want to state a sufficient criterion to verify the $\cd(K,N)$ condition, allowing to test the definition only on suitable pairs of marginals. To this aim, we introduce the notion of bounded probability measure.

\begin{defn}[Bounded probability measure]
\label{def:bounded_meas}
A probability measure $\mu \in \Prob^{ac}(\X,\m)$ is said to be \emph{bounded} if it has bounded support and density bounded from above and below away from zero. A subset $A \subset \Prob^{ac}(\X,\m)$ is said to be \emph{uniformly bounded} if there exist a bounded set $K$ and two constants $C>c>0$ such that for every $\mu= \rho \m \in A$, $\spt (\mu) =K$ and $c\leq \rho \leq C$ $\m$-almost everywhere on $K$.
\end{defn}

\begin{notation}
Given a Borel set $A \subset \X$ such that $0 < \m(A) <\infty$, we will denote by $\m_A$ the normalized restriction of the reference measure to the set $A$, that is 
\begin{equation}
\label{eq:notation_measure}
    \m_A = \frac{\m|_A}{\m(A)}.
\end{equation}
\end{notation}

\begin{prop}\label{prop:toprovecd}
Let $(\X,\di,\m)$ be a proper, geodesic and essentially non-branching metric measure space and assume $\m$ is qualitatively non-degenerate. Then, $(\X,\di,\m)$ satisfies the $\cd(K,N)$ condition if and only if the requirements of Definition \ref{def:cd_condition} hold for any pair of bounded probability measures. 
% In an essentially non-branching metric measure space having the good transport behaviour, in order to show the validity of the $\cd$ condition, one can prove \eqref{eq:CDcond} only for bounded marginals.
\end{prop}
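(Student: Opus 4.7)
The ``only if'' direction is immediate: every bounded probability measure is in $\Prob^{ac}(\X,\m)$, so the $\cd(K,N)$ inequality applied to arbitrary absolutely continuous marginals restricts in particular to bounded pairs. For the converse, I fix $\mu_0 = \rho_0 \m, \mu_1 = \rho_1 \m \in \Prob^{ac}(\X,\m) \cap \Prob_2(\X)$ and aim to construct a Wasserstein geodesic between them satisfying \eqref{eq:CDcond}. The plan is to approximate $(\mu_0, \mu_1)$ in $W_2$ by bounded probability measures $(\mu_0^n, \mu_1^n)$, apply the hypothesis to each pair, and pass to the limit, using the uniqueness provided by Proposition~\ref{prop:Kell} to ensure stability.

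Concretely, since $\mu_0 \ll \m$, Proposition~\ref{prop:Kell} yields a unique $\eta \in \optgeo(\mu_0, \mu_1)$ with $\mu_t := (e_t)_\#\eta \ll \m$ for $t \in [0, 1)$. For the approximation, fixing $\bar x \in \X$, I would set
\begin{equation*}
\mu_i^n := c_{i,n}^{-1}\bigl((\rho_i \wedge n) \vee \tfrac{1}{n}\bigr)\chi_{B_n(\bar x)}\,\m,\qquad i=0,1,
\end{equation*}
with normalization constants $c_{i,n}\to 1$. Each $\mu_i^n$ is bounded in the sense of Definition~\ref{def:bounded_meas}, and a direct application of dominated convergence (both to the densities and to the second moments) gives $\mu_i^n \to \mu_i$ in $W_2$. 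The hypothesis then furnishes, for each $n$, an optimal geodesic plan $\eta^n \in \optgeo(\mu_0^n, \mu_1^n)$ satisfying \eqref{eq:CDcond}, which by Proposition~\ref{prop:Kell} applied to $(\mu_0^n, \mu_1^n)$ is in fact \emph{the} unique optimal geodesic plan. Combining properness of $\X$ with the $W_2$-convergence of the endpoints, $\{\eta^n\}$ is tight, and any weak limit is an optimal geodesic plan from $\mu_0$ to $\mu_1$. By uniqueness, $\eta^n \to \eta$ weakly, so $\mu_t^n := (e_t)_\#\eta^n \to \mu_t$ in $W_2$ for every $t \in [0,1]$.

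Setting $\pi^n := (e_0, e_1)_\#\eta^n$ and $\pi := (e_0, e_1)_\#\eta$, it remains to pass to the limit in $\E_{N'}(\mu_t^n) \leq T^{(t)}_{K,N'}(\pi^n|\m)$. The left-hand side is controlled by the lower semicontinuity of $\E_{N'}$ under $W_2$-convergence; to accommodate the infinite-mass case, I would first reduce to compactly supported $(\mu_0, \mu_1)$ so that the supports of $\{\mu_t^n\}$ all lie in a common bounded set. The main technical obstacle is to establish
\begin{equation*}
\liminf_n T^{(t)}_{K,N'}(\pi^n|\m) \leq T^{(t)}_{K,N'}(\pi|\m),
\end{equation*}
a form of upper semicontinuity of the distortion functional. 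The difficulty is the factor $\rho_i^{-1/N'}$, which may blow up where $\rho_i$ is close to zero. My strategy is to split each integral in $T^{(t)}_{K,N'}(\pi^n|\m)$ into a contribution from a ``regular'' region where $\rho_i$ is uniformly bounded above and below (so the integrand is controlled and dominated convergence, combined with the weak convergence of $\pi^n$, yields the correct limit), plus an ``exceptional'' contribution on the residual set whose smallness is ensured by a careful choice of the truncation parameters entering the construction of the $\mu_i^n$'s. Combining the two estimates then gives \eqref{eq:CDcond} for the pair $(\mu_0, \mu_1)$ along the geodesic $\eta$, completing the proof.
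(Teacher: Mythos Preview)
Your approach---modify the densities to make them bounded, then pass to the limit via semicontinuity---is natural but leaves a genuine gap, and it differs substantially from the paper's argument.

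First, a minor point: your approximants $\mu_i^n$ as written need not satisfy $c_{i,n}\to 1$ or converge to $\mu_i$. Forcing the density to be at least $1/n$ on \emph{all} of $B_n(\bar x)$ adds mass $\m\big(B_n(\bar x)\setminus\{\rho_i\ge 1/n\}\big)/n$, and nothing prevents $\m(B_n(\bar x))$ from growing faster than $n$. This is repairable (restrict the floor to $\spt(\mu_i)$, or simply work on $\{\rho_i>1/n\}\cap B_n$), so it is not the main issue.

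The real gap is the upper semicontinuity step $\limsup_n T^{(t)}_{K,N'}(\pi^n|\m)\le T^{(t)}_{K,N'}(\pi|\m)$. Your plans $\pi^n$ are related to $\pi$ only through weak convergence, with no pointwise comparison, and the integrand involves $(\rho_i^n)^{-1/N'}$ against a limit $\rho_i^{-1/N'}$ that is in general neither bounded nor in $L^1(\m)$ (since $\rho_i$ is not bounded away from zero). Your ``regular/exceptional'' splitting does not close the argument: on the exceptional set $\{\rho_0<c\}$ the integrand blows up, and you would need to show that the $\pi^n$-contribution there is small \emph{uniformly in $n$}, which is delicate because $\pi^n$ puts mass precisely where you have artificially raised the density. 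The paper does prove a result of this flavor (Lemma~\ref{lem:TKN}), but only when the \emph{limit} marginals are bounded; that hypothesis is exactly what fails here. You also defer the reduction to compactly supported marginals without indicating how to perform it without running into the same semicontinuity issue.

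The paper's proof sidesteps all of this by a different mechanism: instead of modifying the densities, it \emph{restricts the unique optimal geodesic plan} $\eta$ to the set $G_n=\{\gamma:\gamma_0\in A_n,\ \gamma_1\in B_n\}$, where $A_n,B_n$ are sublevel/superlevel sets of $\rho_0,\rho_1$. Because both $\pi$ and $\pi^{-1}$ are induced by maps (good transport behaviour), the marginals of the restricted plan are just normalized restrictions $\mu_i|_{\tilde A_n}/\eta(G_n)$, hence bounded, and the restricted plan is itself the unique optimal one between them. This yields the pointwise inequality $\eta(G_n)\rho_t^n\le \rho_t$ for the intermediate densities, so the entropy comparison is a direct monotone inequality rather than a semicontinuity statement; on the right-hand side one integrates the \emph{same} integrand against $\pi|_{\tilde A_n\times\tilde B_n}$ and lets $n\to\infty$ by monotone convergence. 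No weak-limit or upper-semicontinuity argument is needed at all.
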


\begin{proof}
Suppose that the $\cd(K,N)$ condition holds for every pair of bounded marginals and fix $\mu_0=\rho_0\m,\,\mu_1=\rho_1\m\in \Prob^{ac}(\X,\m)$. According to Proposition \ref{prop:Kell}, both the good transport behaviour and the strong interpolation property hold for $(\X,\di,\m)$. Then, let $\eta$ be the unique optimal geodesic plan connecting $\mu_0$ and $\mu_1$ and $\pi=(e_0,e_1)_{\#}\eta$ the unique optimal transport plan between them. Moreover, $(e_t)_{\#}\eta \ll \m$ for any $t\in [0,1]$ and we denote by $\rho_t$ its density. Fix $x_0\in \X$ and define (up to null sets) the following sets
\begin{equation*}
    A_n:=\{x\in \mathsf{B}_n(x_0)\,:\, 1/n \leq \rho_0(x) \leq n \} \quad \text{and}\quad B_n:=\{x\in \mathsf{B}_n(x_0)\,:\, 1/n \leq \rho_1(x) \leq n \}.
\end{equation*}
Then, we introduce the set 
\begin{equation*}
G_n:= \{ \gamma \in \Geo(\X) \,:\, \gamma(0)\in A_n, \, \gamma(1)\in B_n\}
\end{equation*}
and the measures 
\begin{equation}
    \eta_n := \eta_{G_n}\in\Prob(\Geo(\X)) \quad \text{and} \quad \pi_n := (e_0,e_1)_{\#} \eta_n\in\Prob(\X\times \X) .
\end{equation}
Note that, since $\eta\in\optgeo(\mu_0,\mu_1)$, its restriction $\eta_n$ is still optimal between $\mu_0^n:= (e_0)_{\#} \eta_n$ and $\mu_1^n:= (e_1)_{\#} \eta_n$ and, as a consequence, $\pi_n\in\opt(\mu_0^n,\mu_1^n)$. In addition, according to Proposition \ref{prop:Kell}, $\eta_n$ is the unique optimal geodesic plan between $\mu_0^n$ and $\mu_1^n$, and $\pi_n$ is the unique optimal plan.
Observe that, thanks to the good transport behaviour, both $\pi$ and $\pi^{-1}:=(e_1,e_0)_{\#}\eta$ are induced by a map, thus, defining $\tilde A_n:= e_0(G_n\cap \spt(\eta))\subset A_n$ and $\tilde B_n:= e_1(G_n\cap \spt(\eta))\subset B_n$, it holds that
\begin{equation}
\label{eq:approx_marginals}
    \mu_0^n = \frac{\mu_0|_{\tilde A_n}}{\eta(G_n)} \quad \text{and} \quad \mu_1^n= \frac{\mu_1|_{\tilde B_n}}{\eta(G_n)}.
\end{equation}
This shows in particular that $\mu_0^n$ and $\mu_1^n$ are bounded for every $n$. Moreover, by definition $\eta(G_n)\cdot\eta_n\leq \eta$, thus denoting by $\rho_t^n$ the density of $(e_t)_{\#}\eta_n$ (with respect to $\m$) and setting $\tilde\rho_t^n:=\eta(G_n)\cdot\rho_t^n$ for every $t\in[0,1]$, we have
\begin{equation}
\label{eq:ineq_rotti}
    \tilde \rho_t^n \leq \rho_t \qquad\m\text{-a.e.},\ \forall \, t\in [0,1].
\end{equation}
On the other hand, the families $\{A_n\}_{n\in\N}$ and $\{B_n\}_{n\in\N}$ exhaust the supports of $\mu_0$ and $\mu_1$ respectively, hence $\eta(G_n)\to 1$ and $\pi(\X \times \X \setminus (\tilde A_n \times \tilde B_n))\to 0$ as $n\to \infty$. Applying the $\cd(K,N)$ condition for the bounded marginals $\mu_0^n$ and $\mu_1^n$, we have, for every $N'\geq N$ and for every $t\in [0,1]$, 
\begin{equation}
\begin{split}
     (\eta&(G_n))^{\frac{1}{N'}-1} \int (\tilde\rho_t^n)^{1-\frac 1 {N'}} \de \m =\int (\rho_t^n)^{1-\frac 1 {N'}} \de \m \\
     &\geq \int_{\X \times \X} \Big[ \tau^{(1-t)}_{K,N'} \big(\di(x,y) \big) \rho_{0}^n(x)^{-\frac{1}{N'}} +    \tau^{(t)}_{K,N'} \big(\di(x,y) \big) \rho_{1}^n(y)^{-\frac{1}{N'}} \Big]   \de \pi_n(x,y) \\
     &= (\eta(G_n))^{\frac{1}{N'}-1} \int_{\tilde A_n \times \tilde B_n} \Big[ \tau^{(1-t)}_{K,N'} \big(\di(x,y) \big) \rho_{0}(x)^{-\frac{1}{N'}} +    \tau^{(t)}_{K,N'} \big(\di(x,y) \big) \rho_{1}(y)^{-\frac{1}{N'}} \Big]  \de \pi(x,y),
\end{split}
\end{equation}
where the last equality follows from \eqref{eq:approx_marginals} and from the fact that $\pi_{\tilde A_n\times \tilde B_n}$ coincides with $\pi_n$. 
Simplifying the term $\eta(G_n)$ (which is definitely strictly greater than $0$) and using \eqref{eq:ineq_rotti}, we obtain for every $N'\geq N$ and for every $t\in [0,1]$
\begin{equation}
\begin{split}
     \int \rho_t^{1-\frac{1}{N'}} \de \m &\geq \int (\tilde\rho_t^n)^{1-\frac 1 {N'}} \de \m \\
     &\geq \int_{\tilde A_n \times \tilde B_n} \Big[ \tau^{(1-t)}_{K,N'} \big(\di(x,y) \big) \rho_{0}(x)^{-\frac{1}{N'}} +    \tau^{(t)}_{K,N'} \big(\di(x,y) \big) \rho_{1}(y)^{-\frac{1}{N'}} \Big]  \de \pi(x,y),
\end{split}
\end{equation}
and taking the limit as $n\to \infty$, we conclude that 
\begin{equation}
     \int \rho_t^{1-\frac{1}{N'}} \de \m \geq \int_{\X \times \X} \Big[ \tau^{(1-t)}_{K,N'} \big(\di(x,y) \big) \rho_{0}(x)^{-\frac{1}{N'}} +    \tau^{(t)}_{K,N'} \big(\di(x,y) \big) \rho_{1}(y)^{-\frac{1}{N'}} \Big]  \de \pi(x,y),
\end{equation}
which is exactly \eqref{eq:CDcond} for $\mu_0$ and $\mu_1$. This concludes the proof. 
\end{proof}

We introduce now a generalized version of the classical Brunn--Minkowski inequality to the non-smooth setting. Similarly to $\cd$ condition, this inequality takes into account dimensional and curvature parameters.

\begin{defn}[Brunn--Minkowski inequality] 
\label{def:BRUNOOO}
Let $(\X,\di,\m)$ be a metric measure space and let $K \in \R$ and $N >1$. We say that $(\X,\di,\m)$ supports the \emph{Brunn--Minkowski inequality} $\bm(K,N)$ if, for every pair of nonempty Borel sets $A,B \subset \spt(\m)$, the following inequality holds for every $N'\geq N$ and every $t \in [0,1]$:
\begin{equation}\label{eq:BM}
    \bar \m \big(M_t(A,B)\big) \big)^ \frac{1}{N'} \geq \tau_{K,N'}^{(1-t)} \big(\Theta (A,B)\big) \cdot \m(A)^ \frac{1}{N'} + \tau_{K,N'}^{(t)} \big(\Theta (A,B)\big) \cdot \m(B)^ \frac{1}{N'}, 
\end{equation}
where 
\begin{equation}\label{eq:defTheta}
    \Theta (A,B):=\left\{\begin{array}{ll}\displaystyle{\inf_{x \in A,\, y \in B} }\di(x, y) & \text { if } K \geq 0, \\ \displaystyle{\sup _{x \in A,\, y \in B} }\di(x, y) & \text { if } K<0.\end{array}\right. 
\end{equation}
\end{defn} 
Similarly as for the $t$-midpoints, we adopt the notation $\Theta(A,x):= \Theta(A,\{ x \})$ and $\Theta(x,A) := \Theta(\{ x\},A)$, for every $x \in \X$ and $A \subset \X$.

\begin{lem}
\label{lem:bm_proper_geod_etc}
Let $(\X,\di,\m)$ be a metric measure space supporting $\bm(K,N)$. Then, $(\spt(\m),\di)$ is a Polish, geodesic and proper metric space. Moreover, $\m$ is a Radon measure.    
\end{lem}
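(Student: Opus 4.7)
My plan is to extract from $\bm(K,N)$ a local doubling estimate for $\m$ on $\spt(\m)$ and to read off all four conclusions from it, together with completeness and separability inherited from $(\X,\di)$.

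\textbf{Local doubling and properness.} For $x \in \spt(\m)$ and $r > 0$, I would apply $\bm(K,N)$ to $A := \mathsf{B}_{2r}(x) \cap \spt(\m)$ and $B := \{x\}$ at time $t = 1/2$. Any $1/2$-midpoint between $a \in A$ and $x$ lies at distance $\di(a,x)/2 < r$ from $x$, so $M_{1/2}(A,x) \subset \mathsf{B}_r(x)$, whence $\bar\m(M_{1/2}(A,x)) \le \m(\mathsf{B}_r(x))$. Moreover $\Theta(A,x) \le 2r$ (in fact $= 0$ when $K \ge 0$, since $x \in A$), so the monotonicity \eqref{eq:monotonicitytau} yields $\tau_{K,N}^{(1/2)}(\Theta(A,x)) \ge c(K,N,r) > 0$. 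Taking $N' = N$ in BM and raising to the $N$-th power, this produces the local doubling
\begin{equation}
\m(\mathsf{B}_r(x)) \ge c(K,N,r)^N \, \m(\mathsf{B}_{2r}(x)) \qquad \forall\, x \in \spt(\m),\ r > 0.
\end{equation}
Iterating on dyadic scales, any $\varepsilon$-separated sequence inside a bounded subset of $\spt(\m)$ would yield disjoint balls with uniformly positive measure, contradicting that $\m$ is finite on bounded sets. Hence bounded closed subsets of $\spt(\m)$ are totally bounded; combined with completeness of $\spt(\m)$ as a closed subset of $(\X,\di)$, $(\spt(\m),\di)$ is proper.

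\textbf{Polish, geodesic, Radon.} The Polish property is immediate because $\spt(\m)$ is closed in the Polish space $(\X,\di)$. For the geodesic property, given $x,y \in \spt(\m)$ I would apply $\bm(K,N)$ at $t = 1/2$ to the sets $A_n := \mathsf{B}_{1/n}(x) \cap \spt(\m)$ and $B_n := \mathsf{B}_{1/n}(y) \cap \spt(\m)$, obtaining $\bar\m(M_{1/2}(A_n, B_n)) > 0$. Since $\m$ vanishes off $\spt(\m)$, this forces $M_{1/2}(A_n,B_n) \cap \spt(\m) \neq \emptyset$, yielding a midpoint $z_n \in \spt(\m)$ of two points $x_n \to x$, $y_n \to y$. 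By properness, a subsequence of $z_n$ converges in $\spt(\m)$ to a midpoint of $x$ and $y$. Running the argument at all dyadic times and passing to a limit using completeness produces a geodesic. Finally, $\m$ is Radon because any locally finite Borel measure on a proper Polish space is Radon.

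\textbf{Main obstacle.} The crux is the local doubling inequality; once this is in hand, properness, existence of midpoints, and the Radon property all follow by standard arguments. The delicate points to check are that BM may be applied to $B = \{x\}$ (a singleton whose $\m$-mass may vanish, yet BM remains informative) and that $c(K,N,r)$ stays uniformly positive on bounded scales, particularly for $K < 0$ where $\theta \mapsto \tau_{K,N}^{(t)}(\theta)$ is decreasing.
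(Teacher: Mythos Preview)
Your proposal is correct and follows essentially the same route as the paper. Both arguments extract a doubling estimate from $\bm(K,N)$ applied with one set shrunk to (or near) a point, deduce properness from total boundedness, and obtain the geodesic property from the existence of approximate midpoints via $\bm(K,N)$ on small balls; the only cosmetic differences are that the paper cites the Bishop--Gromov inequality from \cite{MR2237207} rather than deriving doubling explicitly, and concludes ``length $+$ proper $+$ complete $\Rightarrow$ geodesic'' rather than building the geodesic by dyadic midpoints as you do.
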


\begin{proof}
Since $(\X,\di)$ is Polish and $\spt(\m)$ is closed, the metric space $(\spt(\m),\di)$ is Polish as well. Moreover, from the proof of \cite[Theorem 2.3]{MR2237207}, $\bm(K,N)$ implies that $(\spt(\m),\di,\m)$ satisfies a Bishop--Gromov inequality, and thus $\m$ is a doubling measure. By a standard argument, this means that $(\spt(\m),\di)$ is a doubling metric space, i.e.\ any bounded set is totally bounded, therefore it is proper and also $\sigma$-compact. As a consequence, $\m$ is Radon, being a locally finite measure on a locally compact, second countable space (see for example \cite[Thoerem 7.8]{MR1681462}). We prove now that $(\spt(\m),\di)$ is length: let $x,y\in \spt(\m)$, $\eps>0$ and fix $A_\eps:=\mathsf{B}_\eps(x)\cap\spt(\m)$ and $B_\eps:=\mathsf{B}_\eps(y)\cap\spt(\m)$. Applying $\bm(K,N)$ we deduce that $\bar \m(M_{1/2}(A_\eps,B_\eps))>0$, therefore there exists $z\in M_{1/2}(A_\eps,B_\eps)\cap\spt(\m)$. In particular, by construction, this implies that:
\begin{equation}
    \di(x,z),\di(z,y)\leq \frac12\di( x, y)+\eps.
\end{equation}
Since $x$, $y$ and $\eps$ are arbitrary, we can conclude that $(\spt(\m),\di)$ is a length space (see \cite[Proposition 1.4]{MR1377265}). Finally, a complete, proper and length space is geodesic. 
\end{proof}

Applying the previous lemma, we can deduce that a metric measure space supporting the Brunn--Minkowski inequality has the properties of Definition \ref{def:gtb_sip}.

\begin{cor}\label{cor:BMnonfaschifo}
Let $(\X,\di, \m)$ be an essentially non-branching metric measure space supporting the Brunn--Minkowski inequality $\bm(K,N)$. Then, $(\X,\di, \m)$ has the good transport behavior and the strong interpolation property.
\end{cor}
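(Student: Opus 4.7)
The plan is to deduce the corollary from Proposition \ref{prop:Kell}. By Lemma \ref{lem:bm_proper_geod_etc}, $\bm(K,N)$ already forces $(\spt(\m),\di)$ to be Polish, proper and geodesic, with $\m$ Radon. Combined with the essentially non-branching hypothesis, the only missing ingredient needed to invoke Proposition \ref{prop:Kell} (applied to $(\spt(\m),\di,\m)$, which is harmless since all optimal transport plans considered in Definition \ref{def:gtb_sip} are supported on $\spt(\m)$) is that $\m$ be qualitatively non-degenerate in the sense of Definition \ref{def:qualitatively_non_deg}.

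To verify this, I would fix $R>0$ and $\bar x \in \spt(\m)$, and for any $x \in \mathsf B_R(\bar x)\cap\spt(\m)$ and any Borel set $A\subset \mathsf B_R(\bar x)\cap\spt(\m)$, apply $\bm(K,N)$ to the pair $(A,\{x\})$. Discarding the non-negative summand carrying the factor $\m(\{x\})^{1/N}$ yields
\begin{equation}
    \bar\m(M_t(A,x))^{1/N} \geq \tau^{(1-t)}_{K,N}\bigl(\Theta(A,x)\bigr)\m(A)^{1/N}.
\end{equation}
The key observation is that $\Theta(A,x)\leq 2R$ uniformly in $A$ and $x$. The monotonicity property \eqref{eq:monotonicitytau} of $\theta \mapsto \tau^{(1-t)}_{K,N}(\theta)$ then furnishes a lower bound on the distortion coefficient independent of $A$ and $x$: for $K\geq 0$ one has $\tau^{(1-t)}_{K,N}(\Theta(A,x)) \geq \tau^{(1-t)}_{K,N}(0) = 1-t$ (from the $\theta\to 0$ limit of $\sigma$), while for $K<0$ one has $\tau^{(1-t)}_{K,N}(\Theta(A,x)) \geq \tau^{(1-t)}_{K,N}(2R)$.

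Taking the $N$-th power of this lower bound defines the candidate $f_{R,\bar x}(t)$, and an explicit inspection shows $\lim_{t\to 0} f_{R,\bar x}(t) = 1$, so in particular $\limsup_{t\to 0} f_{R,\bar x}(t) > 1/2$. This establishes qualitative non-degeneracy and Proposition \ref{prop:Kell} then delivers the good transport behavior and the strong interpolation property. I do not expect any genuine obstacle: the argument is a one-shot application of $\bm(K,N)$ with a singleton target, combined with the monotonicity of $\tau$ and the crude bound $\Theta(A,x)\leq 2R$. The only minor piece of bookkeeping is the passage from quantifying $x$ over $\mathsf B_R(\bar x)$ to quantifying over $\mathsf B_R(\bar x)\cap \spt(\m)$, which is harmless for the conclusions of Proposition \ref{prop:Kell}.
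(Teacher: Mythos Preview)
Your proposal is correct and follows essentially the same route as the paper: restrict to $\spt(\m)$, invoke Lemma~\ref{lem:bm_proper_geod_etc}, apply $\bm(K,N)$ with a singleton target to obtain qualitative non-degeneracy, and then conclude via Proposition~\ref{prop:Kell}. If anything, your version is slightly more careful than the paper's, since you quantify over all $x\in\mathsf B_R(\bar x)$ (as Definition~\ref{def:qualitatively_non_deg} requires) and explicitly bound $\Theta(A,x)\le 2R$ to produce a single function $f_{R,\bar x}$ with $\lim_{t\to 0}f_{R,\bar x}(t)=1$; the paper only displays the inequality at the center $\bar x$ and leaves these checks implicit.
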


\begin{proof}
First of all, we restrict ourselves to the support of $\m$ and consider the metric measure space $(\spt(\m),\di,\m)$. From Lemma \ref{lem:bm_proper_geod_etc}, this is a proper and geodesic metric space. Second of all, letting $R>0$, $\bar x\in\spt(\m)$ and $A\subset\mathsf{B}_R(\bar x)\cap\spt(\m)$ Borel, we may apply $\bm(K,N)$ to $A$ and $\bar x$, obtaining
\begin{equation}
    \bar\m \big(M_t(A,\bar x) \big)\geq \tau_{K,N}^{(1-t)} (\Theta(A,\bar x))^{N}  \m(A),
\end{equation}
for any $t\in [0,1]$. This shows that $\m$ is qualitatively non-degenerate on its support. Finally, applying Proposition \ref{prop:Kell} to the metric measure space $(\spt(\m),\di,\m)$ we conclude the proof. Note that the good transport behavior and the strong interpolation property are only related to optimal transport, which in turn depends on the metric measure structure of $(\X,\di,\m)$ only on the support of $\m$.
\end{proof}

In this paper, we study a stronger version of the Brunn--Minkowski inequality, which is more sensitive to the optimal transport interpolation. 

\begin{defn}[Strong Brunn--Minkowski inequality]
\label{def:forte_bruno}
Let $(\X,\di,\m)$ be a metric measure space and let $K \in \R$ and $N >1$. We say that $(\X,\di,\m)$ supports the \emph{strong Brunn--Minkowski inequality} $\sbm(K,N)$ if, for every pair of Borel sets $A,B \subset \spt(\m)$ such that $0 < \m(A),\m(B) <\infty$, there exists $\eta \in \optgeo(\m_A,\m_B)$, where $\m_A,\m_B$ are as in \eqref{eq:notation_measure}, such that the following inequality holds for every $N'\geq N$ and every $t \in [0,1]$
\begin{equation}\label{eq:SBM}
    \m \big(\spt \big((e_t)_\# \eta \big) \big)^ \frac{1}{N'} \geq \tau_{K,N'}^{(1-t)} \big(\Theta(A,B)\big) \cdot \m(A)^ \frac{1}{N'} + \tau_{K,N'}^{(t)} \big(\Theta(A,B)\big) \cdot \m(B)^ \frac{1}{N'},
\end{equation}
where $\Theta (A,B)$ is defined in \eqref{eq:defTheta}.
\end{defn} 

\begin{prop}\label{prop:nemmenoSBM}
Given $K \in \R$ and $N >1$, if the metric measure space $(\X,\di,\m)$ supports the strong Brunn--Minkowski inequality $\sbm(K,N)$, it also supports the Brunn--Minkowski inequality $\bm(K,N)$.
\end{prop}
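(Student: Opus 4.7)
The plan is to apply $\sbm(K,N)$ to compact approximations of $A$ and $B$, and then to exploit compactness to convert the SBM bound on the support of the midpoint measure into a bound on $\bar\m(M_t(A,B))$. I would first restrict to the non-degenerate case $0<\m(A),\m(B)<\infty$: the infinite-measure case follows by intersecting with balls of radius $R\to\infty$ and using \eqref{eq:monotonicitytau}, while the zero-measure case is either trivial or handled by an auxiliary approximation. By Lemma \ref{lem:bm_proper_geod_etc}, $\m$ is Radon and $(\spt(\m),\di)$ is proper and geodesic, so inner regularity provides compact sets $K_n\subset A$, $L_n\subset B$ with $\m(K_n)\nearrow\m(A)$ and $\m(L_n)\nearrow\m(B)$. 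Applying $\sbm(K,N)$ to $(K_n,L_n)$ yields $\eta_n\in\optgeo(\m_{K_n},\m_{L_n})$ such that
\[
\m\big(\spt((e_t)_\#\eta_n)\big)^{1/N'}\geq\tau^{(1-t)}_{K,N'}(\Theta(K_n,L_n))\,\m(K_n)^{1/N'}+\tau^{(t)}_{K,N'}(\Theta(K_n,L_n))\,\m(L_n)^{1/N'}.
\]

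The key observation is that $M_t(K_n,L_n)$ is compact (hence Borel) and contains $\spt((e_t)_\#\eta_n)$. Compactness follows from Arzelà--Ascoli in the proper geodesic space $(\spt(\m),\di)$: the family $\{\gamma\in\Geo(\X):\gamma(0)\in K_n,\gamma(1)\in L_n\}$ is equi-Lipschitz with bounded image, hence compact in $C([0,1],\X)$ (closed under uniform limits by continuity of $e_0,e_1$), and its image under the continuous map $e_t$ is the compact set $M_t(K_n,L_n)$. The containment is immediate, since $(e_t)_\#\eta_n$ is concentrated on the closed set $M_t(K_n,L_n)$. This gives
\[
\m\big(\spt((e_t)_\#\eta_n)\big)\leq\m\big(M_t(K_n,L_n)\big)\leq\bar\m\big(M_t(A,B)\big),
\]
where the second inequality uses $M_t(K_n,L_n)\subset M_t(A,B)$ together with Borel measurability of the compact set $M_t(K_n,L_n)$.

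Combining the two displays with \eqref{eq:monotonicitytau} — which gives $\tau^{(s)}_{K,N'}(\Theta(K_n,L_n))\geq\tau^{(s)}_{K,N'}(\Theta(A,B))$ in both cases, since for $K\geq 0$ we have $\Theta(K_n,L_n)\geq\Theta(A,B)$ (infimum over smaller set) and $\tau^{(s)}$ is nondecreasing, while for $K<0$ both inequalities reverse — I obtain
\[
\bar\m\big(M_t(A,B)\big)^{1/N'}\geq\tau^{(1-t)}_{K,N'}(\Theta(A,B))\,\m(K_n)^{1/N'}+\tau^{(t)}_{K,N'}(\Theta(A,B))\,\m(L_n)^{1/N'},
\]
and letting $n\to\infty$ concludes the proof. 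The main obstacle is precisely the compactness of $M_t(K_n,L_n)$: without a closed set sandwiched between $\spt((e_t)_\#\eta_n)$ and $M_t(A,B)$, the support of the midpoint measure given by SBM could a priori leak into $\overline{M_t(A,B)}\setminus M_t(A,B)$ on a set of positive $\m$-measure, and the SBM estimate would fail to translate into a bound on the outer measure of $M_t(A,B)$. Passing to compact subsets and invoking Arzelà--Ascoli in the proper space $(\spt(\m),\di)$ is the mechanism that enforces the needed closedness.
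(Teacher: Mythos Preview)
Your proposal has a circularity problem: you invoke Lemma~\ref{lem:bm_proper_geod_etc} to obtain that $(\spt(\m),\di)$ is proper (needed for your Arzel\`a--Ascoli step), but that lemma \emph{assumes} $\bm(K,N)$, which is exactly what you are proving. Without properness the geodesic family from $K_n$ to $L_n$ has bounded image but need not be relatively compact, so $M_t(K_n,L_n)$ need not be closed and your sandwich argument collapses. The paper breaks the circle by first handling $0<\m(A),\m(B)<\infty$ with a one-line direct argument --- $(e_t)_\#\eta$ is concentrated on $M_t(A,B)$, hence $\spt((e_t)_\#\eta)\subset M_t(A,B)$ up to a $\m$-null set and $\m(\spt((e_t)_\#\eta))\leq\bar\m(M_t(A,B))$ --- and only \emph{then} observes that the proof of Lemma~\ref{lem:bm_proper_geod_etc} uses $\bm(K,N)$ only for sets of finite positive measure, so properness and Radon-ness become available for the remaining cases. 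Your compactness mechanism is a legitimate way to address the leakage concern you raise, but it must come after this bootstrap, not before.

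A second gap: the zero-measure case is not ``trivial or handled by an auxiliary approximation''. If $\m(B)=0$ you cannot apply $\sbm$ at all, since the definition requires both sets to have positive measure. The paper treats $B=\{x\}$ by thickening to $\mathsf B_r(x)$, applying the already-established finite-positive-measure $\bm$, and proving the non-obvious identity $\bigcap_{r>0}M_t(A,\mathsf B_r(x))=M_t(A,x)$ for compact $A$ via an Ascoli--Arzel\`a argument (which again uses properness); the general zero-measure $B$ then follows from monotonicity of $M_t(\cdot,\cdot)$ and of the $\tau$ coefficients, and arbitrary Borel sets by inner regularity. This is genuine work that your sketch omits.
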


\begin{proof}
Fix $N'\geq N$ and $t\in [0,1]$. Let $A,B\subset \spt(\m)$ be two Borel sets with $0 < \m(A),\m(B) <\infty$. Then, for every $\eta \in \optgeo(\m_A,\m_B)$ it holds that $\spt \big((e_t)_\# \eta) \subset M_t(A,B)$ up to a $\m$-null set. Therefore, $\sbm(K,N)$ implies \eqref{eq:BM} for sets with finite and positive measure. Moreover, since the proof of Lemma \ref{lem:bm_proper_geod_etc} only relies on $\bm(K,N)$ for sets of finite and positive measure, we can deduce that $(\spt(\m),\di)$ is proper, geodesic and $\m$ is Radon. Let us prove now $\bm(K,N)$ for any compact sets $A,B\subset\spt(\m)$, with possibly zero measure. If $\m(A)=\m(B)=0$, there's nothing to prove, hence we may assume $\m(A)>0$ and $\m(B)=0$. Moreover, for the time being, assume also that $B=\{x\}$, where $x\in\spt(\m)$.
 Applying the Brunn--Minkowski inequality with the sets $A$ and $\mathsf{B}_r(x)$ (for $r>0$) we obtain 
\begin{equation}\label{eq:proofGTB}
    \m \big(M_t(A,\mathsf{B}_r(x)\big) \big)^ \frac{1}{N'} \geq \tau_{K,N'}^{(1-t)} (\Theta_r) \cdot \m(A)^ \frac{1}{N'} + \tau_{K,N'}^{(t)} (\Theta_r) \cdot \m(\mathsf{B}_r(x))^ \frac{1}{N'} ,
\end{equation}
where $\Theta_r:= \Theta\big(A, \mathsf{B}_r(x)\big)$. On the other hand, it can be proven that
\begin{equation}
   \bigcap_{r>0} M_t(A,\mathsf{B}_r(x)) = M_t(A,x) .
\end{equation}
The $\supset$ inclusion is obvious, while to prove $\subset$ we take $w \in \bigcap_{r>0}M_t(A,\mathsf{B}_r(x))$ and we observe that, given a sequence $\{r_n\}_{n\in \N}$ converging to $0$, there exist $a_n\in A$ and $x_n\in \mathsf{B}_{r_n}(x)$ such that $w$ is a $t$-midpoint of $a_n$ and $x_n$. Since $A$ is compact, up to subsequences $a_n \to a\in A$ and then, by Ascoli--Arzel\`a theorem, $w$ is a $t$-midpoint of $a$ and $x$, thus $w\in M_t(A,x)$. At this point, noting that the sets $M_t(A,\mathsf{B}_r(x))$ are decreasing as $r\to 0$, we can pass to the limit \eqref{eq:proofGTB} and obtain
\begin{equation}
\label{eq:bm_point_compact}
    \m \big(M_t(A,x) \big)^ \frac{1}{N'} \geq \tau_{K,N'}^{(1-t)} (\Theta(A, x)) \cdot \m(A)^ \frac{1}{N'} .
\end{equation}
Now, let $B\subset\spt(\m)$ any compact set with $\m(B)=0$. Then for any $x\in B$, we have 
\begin{equation}
\label{eq:monotonicity_midpoints_tau}
    M_t(A,x)\subset M_t(A,B)\qquad\text{and}\qquad\tau_{K,N'}^{(1-t)}(\Theta(A,x))\geq\tau_{K,N'}^{(1-t)}(\Theta(A,B)),
\end{equation}
where the inequality follows from \eqref{eq:monotonicitytau}. Thus, we may apply \eqref{eq:bm_point_compact} and \eqref{eq:monotonicity_midpoints_tau}, obtaining 
\begin{equation}
\label{eq:bm_compact_compact}
\begin{split}
    \m \big(M_t(A,B) \big)^ \frac{1}{N'} &\geq 
    \m \big(M_t(A,x) \big)^ \frac{1}{N'} \\
    &\geq \tau_{K,N'}^{(1-t)} (\Theta(A, x)) \cdot \m(A)^ \frac{1}{N'}\geq \tau_{K,N'}^{(1-t)} (\Theta(A, B)) \cdot \m(A)^ \frac{1}{N'} .
\end{split}
\end{equation}
% Then, choose two sequences $\{x_n\}\subset A$, $\{y_n\}\in B$ such that
% \begin{equation}
%     \di(x_n,y_n)\xrightarrow{n\to\infty}\Theta(A,B)
% \end{equation}
% Since $\m$ is Radon and thus outer regular, there exists a (decreasing) sequence $\{U_n\}_{n\in\N}$ of open subsets of $\spt(\m)$, such that $B\subset U_n$, $0<\m(U_n)<\infty$,
% \begin{equation*}
%     \lim_{n \to \infty} \m (U_n) = 0 \qquad \text{and} \qquad U_n \subset \bigg\{ z \in \X \, :\, \di(z, \Bar{B})< \frac 1n\bigg\}.
% \end{equation*}
% The second condition ensures that   
% \begin{equation*}
%      \Theta(A,U_n) \to \Theta(A,B), \, \text{as }n\to \infty.A
% \end{equation*}
% We can then apply the Brunn--Minkowski inequality to the sets $A$ and $U_n$, concluding that for every $N'\geq N$ and every $t \in [0,1]$:
% \begin{equation}\label{eq:BM}
%     \bar \m \big(M_t(A,U_n)\big) \big)^ \frac{1}{N'} \geq \tau_{K,N'}^{(1-t)} \big(\Theta (A,U_n)\big) \cdot \m(A)^ \frac{1}{N'} + \tau_{K,N'}^{(t)} \big(\Theta (A,U_n)\big) \cdot \m(B)^ \frac{1}{N'}, 
% \end{equation}

% The argument above does not work. We need to do
% \begin{itemize}
%     \item Pick $A$ compact (then extend by approx to Borel)
%     \item Do point--compact
%     \item For a general set $B$ with measure $0$, choose a sequence realizing $\Theta$ and conclude using $\bm$
% \end{itemize}
In order to prove \eqref{eq:BM} for any Borel sets $A,B\subset\spt(\m)$, with possibly zero or infinite measure, we use the inner regularity of $\m$. In particular, there exist two sequences of compact sets $\{A_n\}_{n\in\N}$ and $\{B_n\}_{n\in\N}$ such that 
\begin{equation}
    A_n\subset A,\ \m(A_n)\to\m(A)\qquad\text{and}\qquad B_n\subset B,\ \m(B_n)\to\m(B).
\end{equation}
For the sets $A_n$ and $B_n$, inequality \eqref{eq:bm_compact_compact} holds, therefore, using the monotonicity of the $t$-midpoints set and of the distortion coefficients as in \eqref{eq:monotonicity_midpoints_tau}, we obtain that
\begin{equation}
\label{eq:bm_approx}
     \bar \m \big(M_t(A,B)\big) \big)^ \frac{1}{N'} \geq \tau_{K,N'}^{(1-t)} \big(\Theta (A,B)\big) \cdot \m(A_n)^ \frac{1}{N'} + \tau_{K,N'}^{(t)} \big(\Theta (A,B)\big) \cdot \m(B_n)^ \frac{1}{N'}.
\end{equation}
Passing to the limit the right-hand side of \eqref{eq:bm_approx}, we finally conclude that $\bm(K,N)$ holds also for $A$ and $B$. 
% it is sufficient to approximate them with finite measure sets (for example, since $\m$ is locally finite, a suitable approximation for a set $A$ can be $A \cap \mathsf{B}_n(\bar x)=:A_n \to A$ as $n \to \infty$, for a fixed $\bar x \in \X$).
% An analogous inequality can be proven for every measurable set $A\subset \mathsf{B}_R(\bar x)$ through approximation by inner regularity.
\end{proof}

\begin{rmk}\label{rmk:Dt}
From Corollary \ref{cor:BMnonfaschifo} and the previous proposition, an essentially non-branching metric measure space $(X,\di,\m)$ supporting $\sbm(K,N)$ has the good transport behavior and the strong interpolation property, cf. Definition \ref{def:gtb_sip}. Thus, given $A,B\subset\spt(\m)$ Borel sets with finite and positive measure, there exists a unique $\eta\in\optgeo(\m_A,\m_B)$, depending only on the sets $A$ and $B$. Hence, we can introduce the following notation without ambiguity:
\begin{equation}
    \label{eq:ambiguous_notation?}
    D_t(A,B):=\spt\big((e_t)_\#\eta\big),\qquad\forall\,t\in [0,1].
\end{equation}
In particular, the inequality \eqref{eq:SBM} now reads as follows: 
\begin{equation}
\label{eq:SBM2}
    \m \big(D_t(A,B)\big)^ \frac{1}{N'} \geq \tau_{K,N'}^{(1-t)} \big(\Theta(A,B)\big) \cdot \m(A)^ \frac{1}{N'} + \tau_{K,N'}^{(t)} \big(\Theta(A,B)\big) \cdot \m(B)^ \frac{1}{N'}.
\end{equation}
\end{rmk}

It was already noticed in the first works about $\cd$ spaces (see in particular \cite{MR2237207}) that the $\cd(K,N)$ condition implies the Brunn--Minkowski inequality $\bm(K,N)$. Following the exact same proof is actually possible to deduce that the $\cd(K,N)$ condition implies the strong Brunn--Minkowski inequality $\sbm(K,N)$. In the following we provide a quick proof of this fact, in order to be self-contained and avoid confusion.

\begin{prop}
Let $(\X,\di,\m)$ be a $\cd(K,N)$ space, for some $K \in \R$ and $N >1$. Then, it supports the strong Brunn--Minkowski inequality $\sbm(K,N)$.
\end{prop}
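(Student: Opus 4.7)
The plan is to apply the $\cd(K,N)$ condition directly to the normalized uniform distributions on $A$ and $B$, and then to bound the R\'enyi entropy of the interpolated measure from above by the $\m$-measure of its support via H\"older's inequality.

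First, given Borel sets $A,B \subset \spt(\m)$ with $0 < \m(A),\m(B) <\infty$, I would set $\mu_0 := \m_A = \rho_0\m$ and $\mu_1 := \m_B = \rho_1\m$, where $\rho_0$ and $\rho_1$ are (constant multiples of) the characteristic functions of $A$ and $B$ respectively. Applying the $\cd(K,N)$ condition to this pair yields $\eta \in \optgeo(\mu_0,\mu_1)$ such that $\mu_t := (e_t)_\# \eta = \rho_t\m$ for every $t\in[0,1]$ and, for every $N'\geq N$,
\begin{equation}
\int \rho_t^{1-\frac{1}{N'}} \de\m \;\geq\; \int_{\X\times\X}\Big[\tau^{(1-t)}_{K,N'}\bigl(\di(x,y)\bigr)\rho_0(x)^{-\frac{1}{N'}} + \tau^{(t)}_{K,N'}\bigl(\di(x,y)\bigr)\rho_1(y)^{-\frac{1}{N'}}\Big]\de\pi(x,y),
\end{equation}
with $\pi := (e_0,e_1)_\#\eta$. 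This $\eta$ is the candidate needed in Definition~\ref{def:forte_bruno}.

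Next, since $\pi$ is concentrated on $A\times B$, we have $\rho_0(x)^{-1/N'} = \m(A)^{1/N'}$ and $\rho_1(y)^{-1/N'} = \m(B)^{1/N'}$ for $\pi$-a.e.\ $(x,y)$. To pass from $\di(x,y)$ to $\Theta(A,B)$ inside the distortion coefficients, I would invoke the monotonicity property \eqref{eq:monotonicitytau}: when $K\geq 0$, $\tau^{(s)}_{K,N'}$ is nondecreasing and $\di(x,y) \geq \Theta(A,B) = \inf_{A\times B}\di$; when $K<0$, $\tau^{(s)}_{K,N'}$ is nonincreasing and $\di(x,y)\leq\Theta(A,B) = \sup_{A\times B}\di$. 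In both cases $\tau^{(s)}_{K,N'}(\di(x,y)) \geq \tau^{(s)}_{K,N'}(\Theta(A,B))$ $\pi$-a.e., so that the right-hand side above is bounded below by
\begin{equation}
\tau^{(1-t)}_{K,N'}(\Theta(A,B))\,\m(A)^{1/N'} + \tau^{(t)}_{K,N'}(\Theta(A,B))\,\m(B)^{1/N'}.
\end{equation}

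Finally, to relate the left-hand side to $\m(\spt(\mu_t))$, I would apply H\"older's inequality with conjugate exponents $N'/(N'-1)$ and $N'$, writing $\rho_t^{1-1/N'} = \rho_t^{1-1/N'}\cdot\mathbf{1}_{\spt(\mu_t)}$ and using $\int\rho_t\de\m = 1$ to get
\begin{equation}
\int\rho_t^{1-\frac{1}{N'}}\de\m \;\leq\; \Bigl(\int\rho_t\de\m\Bigr)^{1-\frac{1}{N'}}\!\m(\spt(\mu_t))^{1/N'} = \m(\spt(\mu_t))^{1/N'}.
\end{equation}
Chaining the three displays produces exactly \eqref{eq:SBM} for the chosen $\eta$. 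The argument is essentially bookkeeping: there is no serious obstacle, but some care is required in treating the two sign regimes for $K$ when reducing $\di(x,y)$ to $\Theta(A,B)$, which is the only place where the precise form of $\Theta$ in \eqref{eq:defTheta} enters.
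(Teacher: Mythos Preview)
Your proposal is correct and follows essentially the same route as the paper: apply the $\cd(K,N)$ inequality to $\m_A$ and $\m_B$, use the monotonicity \eqref{eq:monotonicitytau} to replace $\di(x,y)$ by $\Theta(A,B)$, and bound $-\E_{N'}(\mu_t)$ by $\m(\spt(\mu_t))^{1/N'}$. The only cosmetic difference is that the paper phrases the last step as Jensen's inequality rather than H\"older's, which here amounts to the same computation.
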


\begin{proof}
Given any pair of Borel sets $A,B \subset \spt(\m)$ such that $0 < \m(A),\m(B) <\infty$, take the optimal geodesic plan $\eta \in \optgeo(\m_A,\m_B)$ satisfying \eqref{eq:CDcond}. In particular, letting $\pi= (e_0,e_1)_\# \eta$, for every $N'\geq N$ it holds that 
\begin{equation}\label{eq:cdtosbm1}
    \begin{split}
        \E_{N'}((e_t)_\# \eta) &\leq - \int_{\X \times \X} \Big[ \tau^{(1-t)}_{K,N'} \big(\di(x,y) \big) \m(A)^{\frac{1}{N'}} +    \tau^{(t)}_{K,N'} \big(\di(x,y) \big)\m(B)^{\frac{1}{N'}}\Big]   \de \pi(x,y) \\
        & \leq - \Big[ \tau_{K,N'}^{(1-t)} (\Theta(A,B)) \cdot \m(A)^ \frac{1}{N'} + \tau_{K,N'}^{(t)} (\Theta(A,B)) \cdot \m(B)^ \frac{1}{N'} \Big],
    \end{split}
\end{equation}
using \eqref{eq:monotonicitytau}. On the other hand, Jensen's inequality ensures that 
\begin{equation}\label{eq:cdtosbm2}
     \E_{N'}((e_t)_\# \eta) = - \int_{\spt ((e_t)_\# \eta ) } \rho_t(x)^{1-\frac{1}{N'}} \de \m(x)
    \geq
     - \m\big(\spt ((e_t)_\# \eta )\big)^\frac{1}{N'},
\end{equation}
% \begin{align*}\label{eq:cdtosbm2}
%      \E_{N'}((e_t)_\# \eta) &= - \int_{\spt ((e_t)_\# \eta ) } \rho_t(x)^{1-\frac{1}{N'}} \de \m(x)  = 
%         - \m\big(\spt ((e_t)_\# \eta )\big)
%             \int_{\spt ((e_t)_\# \eta ) } \rho_t(x)^{1-\frac{1}{N'}} \de \m_{\spt ((e_t)_\# \eta ) }(x)
% \\
%     &\geq 
%       - \m\big(\spt ((e_t)_\# \eta )\big)
%          \bigg(
%             \int_{\spt ((e_t)_\# \eta ) } \rho_t(x)\de \m_{\spt ((e_t)_\# \eta ) }(x)
%         \bigg)^{1-\frac1{N'}}
%     =
%      - \m\big(\spt ((e_t)_\# \eta )\big)^\frac{1}{N'},
% \end{align*}
where $\rho_t$ denotes the density of $(e_t)_\# \eta$ with respect to $\m$, for every $t\in [0,1]$. Putting together \eqref{eq:cdtosbm1} and \eqref{eq:cdtosbm2}, we obtain \eqref{eq:SBM}, concluding the proof.
\end{proof}

\section{Proof of the main theorem}
\label{sec:main_theorem}
In this section, we prove our main result, Theorem \ref{thm:SBM_imnplies_CD}. For the convenience of the reader, we recall its statement.

\begin{thm}\label{thm:sbmtocd}
% SBM imnplies CD.
Let $(\X,\di,\m)$ be an essentially non-branching metric measure space supporting $\sbm(K,N)$ for some $K \in \R$ and $N >1$. Then, $(\X,\di,\m)$ is a $\cd(K,N)$ space. In particular, $(\X,\di,\m)$ supports $\sbm(K,N)$ if and only if it satisfies $\cd(K,N)$.
\end{thm}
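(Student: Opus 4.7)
\medskip
\noindent\textbf{Proof plan.} The strategy has three layers: reduce to bounded marginals via Proposition \ref{prop:toprovecd}; establish the $\cd(K,N)$ inequality for \emph{step measures} (piecewise constant on finite partitions) by decomposing the unique optimal plan and applying $\sbm(K,N)$ piece by piece; and then approximate arbitrary bounded marginals by step measures.

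The reduction step is immediate from what is already established. Proposition \ref{prop:nemmenoSBM} gives $\bm(K,N)$, so Lemma \ref{lem:bm_proper_geod_etc} ensures that $(\spt(\m),\di)$ is Polish, geodesic and proper and that $\m$ is Radon; Corollary \ref{cor:BMnonfaschifo} then adds good transport behavior, strong interpolation property, and qualitative non-degeneracy of $\m$. These are precisely the hypotheses of Proposition \ref{prop:toprovecd}, so it suffices to verify \eqref{eq:CDcond} for any pair of bounded probability measures. Throughout I will freely use Remark \ref{rmk:Dt}: for any Borel $A,B\subset\spt(\m)$ of positive finite measure, the unique $\eta^{A,B}\in\optgeo(\m_A,\m_B)$ has support at time $t$ denoted $D_t(A,B)$, and $\sbm(K,N)$ provides the sharp lower bound \eqref{eq:SBM2}.

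The core of the argument, corresponding to Theorem \ref{thm:cdwithstep}, treats step marginals $\mu_0=\sum_{i=1}^n\alpha_i\m_{A_i}$ and $\mu_1=\sum_{j=1}^m\beta_j\m_{B_j}$ associated to Borel partitions. Let $\eta$ be the unique optimal geodesic plan between them and set $\pi=(e_0,e_1)_\#\eta$. For each pair $(i,j)$ with $\pi(A_i\times B_j)>0$, good transport behavior forces $\pi|_{A_i\times B_j}$ to be concentrated on the graph of a map, so its marginals take the form $c_{ij}\m|_{A_i^j}$ and $c_{ij}\m|_{B_i^j}$ for suitable Borel $A_i^j\subset A_i$, $B_i^j\subset B_j$. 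Applying $\sbm(K,N)$ to $(A_i^j,B_i^j)$ then yields a lower bound on $\m(D_t(A_i^j,B_i^j))$. Essential non-branching together with the uniqueness of the global $\eta$ forces the sets $D_t(A_i^j,B_i^j)$ to be essentially pairwise disjoint as $(i,j)$ varies, so the local estimates can be summed and, after a piecewise application of Jensen's inequality to bound $\int\rho_t^{1-1/N'}\,\de\m$ on each $D_t(A_i^j,B_i^j)$, one obtains exactly \eqref{eq:CDcond} for step marginals.

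For the final passage to arbitrary bounded marginals $\mu_0=\rho_0\m,\mu_1=\rho_1\m$, I would refine the partitions of $\spt(\mu_0)$ and $\spt(\mu_1)$ to have vanishing diameter and define $\mu_0^n,\mu_1^n$ by averaging $\rho_0,\rho_1$ on each piece. These converge in $W_2$ to $\mu_0,\mu_1$ with uniformly bounded densities, and stability of optimal transport on a common compact support yields convergence of the corresponding optimal geodesic plans. One passes to the limit in \eqref{eq:CDcond} via dominated convergence for the right-hand side (the $\tau$-coefficients and the approximating densities are uniformly bounded) and lower semicontinuity of the R\'enyi entropy for measures supported on a common bounded set. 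The main obstacle I anticipate is the step-measure case, and specifically the essential pairwise disjointness of the sets $D_t(A_i^j,B_i^j)$: without it the $\sbm$ bounds cannot be summed, and multiplicities would inflate the entropy estimate. Essential non-branching is invoked here precisely because any overlap at time $t\in(0,1)$ would produce two optimal geodesics branching from a common intermediate point, contradicting the non-branching property of $\eta$.
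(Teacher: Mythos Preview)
Your reduction step and the approximation of bounded marginals by step measures are essentially what the paper does. The gap is in the core step-measure argument. You propose to bound $\int\rho_t^{1-1/N'}\,\de\m$ from below on each $D_t(A_i^j,B_i^j)$ via Jensen, but Jensen for the concave function $r\mapsto r^{1-1/N'}$ yields the \emph{opposite} inequality:
\[
\int_{D_t(A_i^j,B_i^j)}\rho_t^{1-1/N'}\,\de\m\ \le\ \m\big(D_t(A_i^j,B_i^j)\big)^{1/N'}\,\pi(A_i\times B_j)^{1-1/N'}.
\]
So the chain ``$\sbm$ gives a lower bound on $\m(D_t)^{1/N'}$, then Jensen converts it into a lower bound on $\int\rho_t^{1-1/N'}$'' does not close. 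There is a second, independent problem: your partition is the fixed $(i,j)$-block decomposition. The $\sbm$ inequality then produces $\tau^{(t)}_{K,N'}\big(\Theta(A_i^j,B_i^j)\big)$, and without refining the pieces to small diameter you cannot replace $\Theta(A_i^j,B_i^j)$ by the pointwise $\di(x,y)$ needed in \eqref{eq:CDcond}.

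The paper resolves both issues simultaneously. For each $\varepsilon>0$ it refines the partition (using continuity of the optimal map on a large Lusin set) so that both $P_j^\varepsilon$ and $T(P_j^\varepsilon)$ have diameter $<\varepsilon$; this turns $\Theta_j$ into $\di(x,y)\mp\varepsilon$. Crucially, instead of estimating the entropy of the true midpoint $\mu_t$, it builds a \emph{surrogate} measure $\tilde\mu_t^\varepsilon:=\sum_j\mu_0(P_j^\varepsilon)\,\m_{D_t(P_j^\varepsilon,T(P_j^\varepsilon))}$ with piecewise \emph{constant} density. For such a measure the entropy is exactly $-\sum_j\mu_0(P_j^\varepsilon)^{1-1/N'}\m(D_t)^{1/N'}$, so $\sbm(K,N)$ applies termwise with no Jensen loss. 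The paper then shows $\tilde\mu_t^\varepsilon\to\mu_t$ in $W_2$ (by estimating $W_2(\mu_0,\tilde\mu_t^\varepsilon)$ and $W_2(\tilde\mu_t^\varepsilon,\mu_1)$ via the diameter bounds) and invokes lower semicontinuity of $\E_{N'}$ on a common bounded support to pass to the limit. The construction of a piecewise-constant approximate midpoint, rather than a direct entropy estimate on $\mu_t$, is the missing idea in your outline.
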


A key idea in our argument is to prove the $\cd(K,N)$ condition for a suitable subclass of bounded probability measures, called step measures. By an approximation strategy, we then extend the result to \emph{all} bounded measures and finally apply Proposition \ref{prop:toprovecd}, to conclude. 

\begin{defn}
We say that a measure $\mu\in \Prob_2(\X)$ is a \emph{step measure} if it can be written as finite sum of measures with constant density with respect to $\m$, that is 
\begin{equation}
    \mu= \sum_{i=1}^N \lambda_i \m_{A_i},
\end{equation}
where, for every $i=1,\dots, N $, $\lambda_i\in\R$ and $A_i$ is a Borel set with $0 < \m(A_i) <\infty$. Moreover, we assume the sets $\{A_i\}_{i=1,\dots, N}$ to be mutually disjoint.
\end{defn}

%\begin{prop} Let $\mu = \rho \m \in \Prob^{ac}(\X,\m)$ have bounded support, then there exists a sequence of step measures $\{\mu_n\}_{n\in\N}$ $W_2$-convergent to $\mu$ and having the same support of $\mu$, such that \begin{equation}\label{eq:entdecreasing} \E_N(\mu_n) \leq \E_N (\mu), \quad \text{for every }N>1.\end{equation}\end{prop}

%\begin{proof}Since the support is bounded, it is also compact thanks to the metric Hopf-Rinow (see for example \cite[Section 2]{MR1377265}). Thus, for every $n\in \N$, there exists a finite Borel partition $\{P_i^n\}_{i=1}^{N_n}$ of $\spt(\mu)$ (that is $\spt(\mu)= \cup_{i=1}^{N_n}P_i^n$), such that $\mu(P_i^n)>0$ and $\diam(P_i^n)< 1/n$, for $i=1,\dots,N_n$. Then, we define the measure \begin{equation} \mu_n = \sum_{i=1}^{N_n} \mu(P_i^n) \cdot \m_{P_i^n},\end{equation}and notice that the assumption on the diameter of set of the partition ensures that $W_2(\mu_n,\mu)<1/n$. We just need to prove \eqref{eq:entdecreasing}. Using Jensen's inequality we can conclude that \begin{equation}\E_N(\mu)= - \int_\X \rho^{1-\frac 1 N} \de \m = \sum_{i=1}^{N_n} -\int_{P_i^n} \rho^{1-\frac 1 N} \de \m \geq \sum_{i=1}^{N_n} \m(P_i^n)^\frac{1}{N} \cdot \mu(P_i^n)^{1-\frac 1N} = \E_N(\mu_n).\end{equation}\end{proof}

Note that the entropy $\E_N$ of a measure $\nu\in \Prob^{ac}(\X,\m)$ equals to $\m(\spt(\nu))^{1/N}$ if and only if $\nu$ has constant density. Thus, letting $A,B\subset\spt(\m)$ with finite and positive measure, the $\sbm(K,N)$ inequality would translate directly to an information on the entropy of the $t$-midpoint $\mu_t$ between $\m_A$ and $\m_B$, only if $\mu_t$ had constant density. However, we can not expect this to be true in general. The previous discussion suggests that, in order to promote $\sbm(K,N)$ to an inequality on the entropy, an argument based on a subsequently refined partition of the support of the marginals is needed, built in accordance with the optimal transport coupling. Indeed, using the partition argument, the $t$-midpoint between $\m_A$ and $\m_B$ can be approximated in entropy with a step measure, which by definition has locally constant density. In addition, the $\sbm(K,N)$ inequality, applied to each element of the partition, controls the entropy of the step measure approximant. 
% The crucial idea is that the approximation argument described above captures the pointwise behavior of the $t$-midpoint, while retaining the property of having locally constant density. 
The partition argument works also when replacing the measures $\m_A$ and $\m_B$ with general step measures as marginals. The advantage of proving the $\cd(K,N)$ inequality for the class of step measures is that the latter is sufficiently large to deduce $\cd(K,N)$ for all bounded measures, by approximation.

% in order to estimate the entropy of $t$-midpoint using the $\sbm$ inequality, we would   Starting from $\sbm(K,N)$, proving the $\cd$ condition for step measures is a natural first development towards the conclusion, since $\m_A$ and $\m_B$ actually belong to this class. Nonetheless, the proof of this statement is extremely delicate and relies on a fine partition argument on the supports of the marginals, built in accordance with the optimal transport coupling.  The main difficulty is that the $t$-midpoint between two measures of constant density (and also between two step measures) is \emph{not} a constant density. however partitioning the supports we are able to approximate in the $W_2$-topology the interpolant by step measures. The gain now is that we can apply the $\sbm$ inequality to the approximating sequence, proving $\cd$, and then passing to the limit. 

\begin{thm}\label{thm:cdwithstep}
Let $(\X,\di,\m)$ be an essentially non-branching supporting $\sbm(K,N)$ for some $K \in \R$ and $N >1$ and let $\mu_0,\mu_1\in \Prob^{ac}(\X,\m)$ be two step measures with bounded support. Then, there exists $\eta\in \optgeo(\mu_0,\mu_1)$ such that \eqref{eq:CDcond} holds.
%qua bisogna dire che rho_i è la densita di mu_i
\end{thm}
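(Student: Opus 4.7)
My approach combines $\sbm$ applied on a fine partition with the lower semicontinuity of the R\'enyi entropy. Since $\sbm(K,N)$ implies $\bm(K,N)$ by Proposition~\ref{prop:nemmenoSBM}, Corollary~\ref{cor:BMnonfaschifo} gives that $(\X,\di,\m)$ has the strong interpolation property and the good transport behaviour in both directions. Thus, writing $\mu_0=\sum_{i}\alpha_i\m_{A_i}$ and $\mu_1=\sum_{j}\beta_j\m_{B_j}$, there is a unique $\eta\in\optgeo(\mu_0,\mu_1)$; the plan $\pi:=(e_0,e_1)_\#\eta$ is induced by a map $T$ whose inverse is also a map, and $\mu_t:=(e_t)_\#\eta\ll\m$ for every $t\in[0,1)$. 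I claim that this $\eta$ witnesses \eqref{eq:CDcond}.

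The core construction is a refining family of partitions $\{(A^m,B^m)\}_{m=1}^{M_\delta}$ of the support of $\pi$, indexed by a refinement parameter $\delta>0$, with cells of small diameter on both sides. I start from the coarse decomposition $A_{ij}:=A_i\cap T^{-1}(B_j)$, $B_{ij}:=T(A_{ij})$, on which the marginals have constant densities $\alpha_i$ and $\beta_j$; then I partition each $A_{ij},B_{ij}$ into Borel pieces of $\di$-diameter at most $\delta$ and intersect via $T$ and $T^{-1}$ to obtain cells $A^m$, $B^m=T(A^m)$ of diameter at most $\delta$. By Remark~\ref{rmk:Dt}, the unique element of $\optgeo(\m_{A^m},\m_{B^m})$ equals the normalised restriction $\eta^m:=\eta|_{\Gamma^m}/\eta(\Gamma^m)$, where $\Gamma^m:=\{\gamma\in\Geo(\X):(\gamma(0),\gamma(1))\in A^m\times B^m\}$; in particular $\eta(\Gamma^m)=\alpha_{i(m)}\m(A^m)=\beta_{j(m)}\m(B^m)$, where $i(m),j(m)$ denote the indices of the enclosing $A_i,B_j$. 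Applying $\sbm(K,N)$ to $\m_{A^m},\m_{B^m}$ yields, for every $N'\geq N$,
\begin{equation}
    \m(D_t^m)^{1/N'}\geq \tau^{(1-t)}_{K,N'}(\Theta^m)\,\m(A^m)^{1/N'}+\tau^{(t)}_{K,N'}(\Theta^m)\,\m(B^m)^{1/N'},
\end{equation}
where $D_t^m:=\spt((e_t)_\#\eta^m)$ and $\Theta^m:=\Theta(A^m,B^m)$.

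To turn this into an entropy bound, I combine the SBM inequalities with the step-measure approximant $\tilde\mu_t^\delta:=\sum_m\eta(\Gamma^m)\m_{D_t^m}$. The key claim is that, by essential non-branching together with a Lusin-type continuity argument for the midpoint map $x\mapsto e_t(\gamma_x)$ (where $\gamma_x$ is the unique geodesic starting at $x$ in $\spt(\eta)$), the partition can be chosen to also guarantee (i) $\m$-essential pairwise disjointness of the supports $D_t^m$ and (ii) $\max_m\diam(D_t^m)\to 0$ as $\delta\to 0$. Granting this, $\tilde\mu_t^\delta$ is a probability measure, and using the identity $\eta(\Gamma^m)^{1-1/N'}\m(A^m)^{1/N'}=\eta(\Gamma^m)\alpha_{i(m)}^{-1/N'}$ (analogously for $B^m$) together with the previous display,
\begin{equation}
    -\E_{N'}(\tilde\mu_t^\delta)=\sum_m\eta(\Gamma^m)^{1-\frac{1}{N'}}\m(D_t^m)^{\frac{1}{N'}}\geq\sum_m\eta(\Gamma^m)\Big[\tau^{(1-t)}_{K,N'}(\Theta^m)\alpha_{i(m)}^{-\frac{1}{N'}}+\tau^{(t)}_{K,N'}(\Theta^m)\beta_{j(m)}^{-\frac{1}{N'}}\Big],
\end{equation}
whose right hand side is a Riemann-type sum converging to $-T^{(t)}_{K,N'}(\pi|\m)$ as $\delta\to 0$, because $\Theta^m\to\di(x,y)$ uniformly on $A^m\times B^m$ and $\pi(A^m\times B^m)=\eta(\Gamma^m)$. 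On the other hand, coupling $\m_{D_t^m}$ with $(e_t)_\#\eta^m/\eta(\Gamma^m)$ and weighting by $\eta(\Gamma^m)$ gives $W_2(\tilde\mu_t^\delta,\mu_t)\leq\max_m\diam(D_t^m)\to 0$, so the lower semicontinuity of $\E_{N'}$ on measures with uniformly bounded support yields
\begin{equation}
    \E_{N'}(\mu_t)\leq\liminf_{\delta\to 0}\E_{N'}(\tilde\mu_t^\delta)\leq T^{(t)}_{K,N'}(\pi|\m),
\end{equation}
which is \eqref{eq:CDcond} for $\eta$. The main technical obstacle is the bracketed claim: producing a partition which simultaneously respects the essential injectivity of the midpoint map (so that the $D_t^m$ are $\m$-essentially disjoint) and ensures uniform smallness of $\diam(D_t^m)$ is where essential non-branching plays its crucial role.
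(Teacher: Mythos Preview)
Your overall architecture matches the paper's: build a fine partition compatible with the step structure and the optimal map, apply $\sbm$ cell-by-cell, assemble a step approximant $\tilde\mu_t$, then pass to the limit using lower semicontinuity of $\E_{N'}$. The entropy computation and the Riemann-sum passage to $T_{K,N'}^{(t)}(\pi|\m)$ are exactly as in the paper, and claim~(i) (the $\m$-essential disjointness of the $D_t^m$) is also what the paper uses, citing \cite[Proposition~2.7]{MagnaboscoRigoni21}.

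The one genuine divergence is how you prove $\tilde\mu_t^\delta\xrightarrow{W_2}\mu_t$. You rely on claim~(ii), $\max_m\diam(D_t^m)\to 0$, to be obtained via a Lusin argument on the geodesic-selection map $x\mapsto\gamma_x$. This does work (Lusin on that map simultaneously yields continuity of $T=e_1$ and of every $e_t$, hence uniform smallness of $e_t(\gamma_{A^m})$; then one removes the small excluded set at the end exactly as in the paper's final paragraph), but you have only sketched it. The paper sidesteps claim~(ii) entirely: it never controls $\diam(D_t^m)$. Instead, for each cell it reweights $\eta^m$ to a new plan $\tilde\eta^m$ whose $t$-marginal is precisely $\m_{D_t^m}$, and then reads off
\[
W_2\big(\m_{A^m},\m_{D_t^m}\big)\leq 3\varepsilon+t\,W_2(\m_{A^m},\m_{B^m}),\qquad W_2\big(\m_{D_t^m},\m_{B^m}\big)\leq 3\varepsilon+(1-t)\,W_2(\m_{A^m},\m_{B^m}),
\]
using only the diameter bounds on $A^m,B^m$. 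Summing with the weights $\mu_0(A^m)$ and the convexity of $W_2^2$ gives directly that $\tilde\mu_t^\varepsilon$ is an approximate $t$-midpoint of $\mu_0,\mu_1$, hence converges to the unique $\mu_t$. The paper's Lusin step is thus only on $T$ (to build the partition with property~(ii)), while your partition construction via intersecting fine partitions on both sides through $T$ and $T^{-1}$ is neater and avoids that; on the other hand you then pay the Lusin price on the midpoint map. Either route closes the argument; just make your Lusin reduction on $x\mapsto\gamma_x$ and the subsequent removal step explicit.
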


\begin{proof}
Combining Corollary \ref{cor:BMnonfaschifo} and Proposition \ref{prop:nemmenoSBM}, we deduce that $(\X,\di,\m)$ has the good transport behavior and the strong interpolation property. Therefore, letting $\mu_0$ and $\mu_1$ be step measures, i.e.
\begin{equation}
     \mu_0= \sum_{i=1}^{N_0} \lambda_i^0 \m_{A_i} \quad \text{and} \quad  \mu_1= \sum_{i=1}^{N_1} \lambda_i^1 \m_{B_i},
\end{equation}
there exists $\eta\in \optgeo(\mu_0,\mu_1)$, the unique optimal geodesic plan connecting $\mu_0$ and $\mu_1$. Then, $\pi :=(e_0,e_1)_\# \eta\in \opt(\mu_0,\mu_1) $ is the unique optimal transport plan between $\mu_0$ and $\mu_1$ and it is induced by a map $T$, that is $\pi=(\text{id}, T)_\# \mu_0$.
Suppose for now that $T$ is continuous, we will get rid of this assumption later in the proof. In this case, since $\spt(\mu_0)$ is compact (see Lemma \ref{lem:bm_proper_geod_etc}), for every $\varepsilon>0$, it is possible to find a finite partition in Borel sets $\{P_j^\varepsilon\}_{j=1,\ldots,L_\varepsilon}$ of it, that is $\cup_{j=1}^{L_\varepsilon} P_j^\varepsilon= \spt(\mu_0)$ up to an $\m$-null set and $P_i^\varepsilon \cap P_j^\varepsilon= \emptyset$ if $i\ne j$, with the following properties:
\begin{enumerate}
    \item[(i)] $\m(P_j^\varepsilon) >0$, for every $j= 1,\dots, L_\varepsilon$,
    \item[(ii)] $\diam \big(P_j^\varepsilon\big) <\varepsilon$ and $\diam \big(T(P_j^\varepsilon) \big)<\varepsilon$, for every $j= 1,\dots, L_\varepsilon$,
    \item[(iii)] for every $j= 1,\dots, L_\varepsilon$, there exists $i(j)$ such that $P_j^\varepsilon \subset A_{i(j)}$ ,
    \item[(iv)] for every $j= 1,\dots, L_\varepsilon$, there exists $\iota(j)$ such that $T(P_j^\varepsilon) \subset B_{\iota(j)}$.
\end{enumerate}
For example, consider the sets $\{P_{i,j}\}_{i,j}$ defined as $P_{i,j}:= A_i \cap T^{-1}(B_j)$, which already satisfy the properties (iii) and (iv).
The sought partition can then be found as a suitable refinement of the partition $\{P_{i,j}\}_{i,j}$, ensuring the property (ii) using the equicontinuity of the map $T$ on the compact set $\spt(\mu_0)$, and condition (i) by neglecting the sets with zero $\m$-measure.

We observe that the good transport behavior implies that the unique optimal map $T^{-1}$ from $\mu_1$ to $\mu_0$ is such that $T^{-1} \circ T = \text{id}$ $\mu_0$-almost everywhere. In particular, 
\begin{equation}\label{eq:equalityofmass}
    \mu_0(P_j^\varepsilon) = \mu_0 (T^{-1}\circ T (P_j^\varepsilon)) = T_\# \mu_0 (T (P_j^\varepsilon))= \mu_1 (T (P_j^\varepsilon)).
\end{equation}
Now we define the measures $\mu_0^{\varepsilon,j}$, $\mu_1^{\varepsilon,j} \in \sM_+(\X)$ as
\begin{equation}
    \mu_0^{\varepsilon,j}:= \mu_0(P_j^\varepsilon) \cdot \m_{P_j^\varepsilon}   \quad \text{and} \quad \mu_1^{\varepsilon,j}:= \mu_1(T(P_j^\varepsilon))\cdot \m_{T(P_j^\varepsilon)}=\mu_0(P_j^\varepsilon)\cdot  \m_{T(P_j^\varepsilon)}  \, , 
\end{equation}
where the last equality follows from \eqref{eq:equalityofmass}.
% 
% questa parte probbilmente va spiegata un po' meglio
Property (iii) of the partition ensures that $\mu_0^{\varepsilon,j}$ and $\mu_0|_{P_j^\varepsilon}$ are both measures of constant density with respect to $\m$ and with equal mass. Therefore $\mu_0^{\varepsilon,j} = \mu_0|_{P_j^\varepsilon}$, and,  as a consequence 
\begin{equation}
     \mu_0=\sum_{j=1}^{L_\varepsilon} \mu_0|_{P_j^\varepsilon}= \sum_{j=1}^{L_\varepsilon} \mu_0^{\varepsilon,j}.
\end{equation}
Similarly, by \eqref{eq:equalityofmass} and property (iv) we conclude that $T_\# \mu_0^{\varepsilon,j}= \mu_1|_{T(P_j^\varepsilon)}= \mu_1^{\eps,j}$, hence
\begin{equation}
   \mu_1= \sum_{j=1}^{L_\eps} \mu_1^{\varepsilon,j}.
\end{equation}
Defining $\eta_j^\varepsilon := \eta|_{e_0^{-1}(P_j^\varepsilon)} \in \sM_+(\Geo(\X))$, it holds that 
% \begin{equation}
$
    \eta = \sum_{j=1}^{L_\varepsilon} \eta_j^\varepsilon
$.
% \end{equation}
Note that $\Bar{\eta}_j^\varepsilon:= \frac{\eta_j^\varepsilon}{\mu_0(P_j^\varepsilon)} \in \Prob(\Geo(\X))$. Moreover, since it holds that $(e_0,e_1)_\# \eta = \pi=(\text{id}, T)_\# \mu_0$, by \eqref{eq:equalityofmass} we deduce that, for every $j=1,\dots,L_\varepsilon$,
\begin{equation} \label{eq:NONDIRE36}
    \{\Bar{\eta}_j^\varepsilon\}
    % := \left\{\frac{\eta_j^\varepsilon}{\mu_0(P_j^\varepsilon)}\right\} 
    = \optgeo\left( \frac{(e_0)_\#\eta_j^\varepsilon}{\mu_0(P_j^\varepsilon)}, \frac{(e_1)_\#\eta_j^\varepsilon}{\mu_0(P_j^\varepsilon)}\right)= \optgeo(\m_{P_j^\varepsilon},\m_{T(P_j^\varepsilon)}).
\end{equation}
Thus, for every $j$, the curve $t \mapsto \Bar{\mu}_t^{\varepsilon,j}:=(e_t)_\# \Bar{\eta}_j^\varepsilon$ is the unique Wasserstein geodesic connecting $\m_{P_j^\varepsilon}$ and $\m_{T(P_j^\varepsilon)}$, hence 
\begin{equation}
\label{eq:propmubar}
    D_t\big(P_j^\varepsilon, T(P_j^\varepsilon)\big) = \spt(\Bar{\mu}_t^{\varepsilon,j})
        \, , \quad 
    \forall t \in [0,1],
\end{equation}
where the set $D_t(\cdot,\cdot)$ is defined in Remark \ref{rmk:Dt}. As a consequence of the strong interpolation property, for every $j$ and $t$ the measure $\Bar{\mu}_t^{\varepsilon,j}$ is absolutely continuous with respect to $\m$, with density $\Bar{\rho}_t^{\varepsilon,j}$.
 Moreover, by definition
 \begin{align}
    \label{eq:proprhobar}
    \Bar{\rho}_t^{\varepsilon,j}>0 
    \quad \Bar\mu_t^{\eps,j} \text{-almost everywhere on} \  D_t\big(P_j^\varepsilon, T(P_j^\varepsilon)\big).
 \end{align}
In addition, we can apply the strong Brunn--Minkowski inequality $\sbm(K,N)$ and deduce that for every $j=1,\dots,L_\varepsilon$,  $N'\geq N$, and $t\in[0,1]$ it holds that
\begin{equation}\label{eq:BMapplied}
    \m \big(D_t\big(P_j^\varepsilon, T(P_j^\varepsilon)\big)\big)^ \frac{1}{N'} \geq \tau_{K,N'}^{(1-t)} (\Theta_j) \cdot \m\big(P_j^\varepsilon\big)^ \frac{1}{N'} + \tau_{K,N'}^{(t)} (\Theta_j) \cdot \m\big(T(P_j^\varepsilon)\big)^ \frac{1}{N'} ,
\end{equation}
where we use the shorthand notation $\Theta_j:= \Theta(P_j^\eps, T(P_j^\eps))$.
% \begin{equation}
%     \Theta_j:=\left\{\begin{array}{ll}\displaystyle{\inf_{x \in P_j^\varepsilon,\, y \in T(P_j^\varepsilon)} }\di(x, y) & \text { if } K \geq 0, \\ \displaystyle{\sup _{x \in P_j^\varepsilon,\, y \in T(P_j^\varepsilon)}}\di(x,y) & \text { if } K<0.\end{array}\right. 
% \end{equation}

The next goal is to find a suitable approximately $t$-intermediate point $\tilde \mu_t^\eps$ between $\mu_0$ and $\mu_1$. We claim that this can be achieved by considering a family of measures $\Tilde{\mu}_t^{\varepsilon,j} \in \sM_+(\X)$ 
supported on the sets $D_t\big(P_j^\varepsilon, T(P_j^\varepsilon)\big)$ and having constant density, then gluing them together. This would allow us to use \eqref{eq:BMapplied} on each set $P_j^\varepsilon$ and provide a lower bound for the entropy of $\tilde \mu_t^\eps$ .
Precisely, we define, for every $t\in [0,1]$, 
\begin{equation}
    \Tilde{\mu}_t^{\varepsilon,j} := \mu_0(P_j^\varepsilon) \cdot \m_{D_t\big(P_j^\varepsilon, T(P_j^\varepsilon)\big)} \, \text{ for every }j=1,\dots,L_\varepsilon \quad \text{and} \quad \Tilde{\mu}_t^\varepsilon := \sum_{j=1}^{L_\varepsilon} \Tilde{\mu}_t^{\varepsilon,j}.
\end{equation}
Note that, for every $t\in [0,1]$, $ D_t\big(P_j^\varepsilon, T(P_j^\varepsilon)\big)$ has positive measure by \eqref{eq:BMapplied}. Moreover, since $ D_t\big(P_j^\varepsilon, T(P_j^\varepsilon)\big)$ is bounded (being contained in the $t$-midpoints of two bounded sets), it also has finite measure, therefore $\Tilde{\mu}_t^{\varepsilon,j}$ is well defined.

Since $(\X,\di,\m)$ is essentially non-branching, one can prove that $D_t\big(P_j^\varepsilon, T(P_j^\varepsilon)\big) \cap D_t\big(P_i^\varepsilon, T(P_i^\varepsilon)\big)$ is a $\m$-null measure set, whenever $i$ and $j$ are different, see for example \cite[Proposition~2.7]{MagnaboscoRigoni21}.
Let $\tilde \rho_t^\eps$ be the density of $\tilde \mu_t^\eps$ with respect to $\m$, i.e. $\Tilde{\mu}_t^\varepsilon= \Tilde{\rho}_t^\varepsilon  \m \in \Prob(\X)$. Then, for any $t\in [0,1]$ and $N'\geq N$, the entropy $\E_{N'}$ of $\Tilde{\mu}_t^\varepsilon$ is given by
\begin{equation}\label{eq:midpointentropy}
\begin{split}
    \E_{N'}(\Tilde{\mu}_t^\varepsilon) = -\int_\X (\Tilde\rho_t^\varepsilon) ^{1-\frac{1}{N'}} \de \m &= -\sum_{j=1}^{L_\varepsilon}\int_{D_t\big(P_j^\varepsilon, T(P_j^\varepsilon)\big) } (\Tilde\rho_t^\varepsilon) ^{1-\frac{1}{N'}} \de \m \\
    &= - \sum_{j=1}^{L_\varepsilon} \mu_0(P_j^\varepsilon)^{1-\frac{1}{N'}} \m \big(D_t\big(P_j^\varepsilon, T(P_j^\varepsilon)\big) \big)^\frac{1}{N'}
\end{split}
\end{equation}
The combination of \eqref{eq:BMapplied} and \eqref{eq:midpointentropy} gives the following estimate, where $\pi_j:= \pi|_{P_j^\varepsilon \times T(P_j^\varepsilon)}$:
\begin{equation}
    \begin{split}
        \E_{N'}(\Tilde{\mu}_t^\varepsilon) &\leq -\sum_{j=1}^{L_\varepsilon} \mu_0(P_j^\varepsilon)^{1-\frac{1}{N'}} \bigg[\tau_{K,N'}^{(1-t)} (\Theta_j) \cdot \m\big(P_j^\varepsilon\big)^ \frac{1}{N'} + \tau_{K,N'}^{(t)} (\Theta_j) \cdot \m\big(T(P_j^\varepsilon)\big)^ \frac{1}{N'}\bigg]\\
        &= -\sum_{j=1}^{L_\varepsilon} \int \Big[\tau_{K,N'}^{(1-t)} (\Theta_j) \rho_0^{-\frac{1}{N'}}(x) + \tau_{K,N'}^{(t)} (\Theta_j) \rho_1^{-\frac{1}{N'}}(y)\Big]\,  \de \pi_j(x,y)\\
        & \leq -\sum_{j=1}^{L_\varepsilon} \int \Big[\tau_{K,N'}^{(1-t)} (\di(x,y)\mp \varepsilon) \rho_0^{-\frac{1}{N'}}(x) + \tau_{K,N'}^{(t)} (\di(x,y)\mp \varepsilon) \rho_1^{-\frac{1}{N'}}(y)\Big]\,  \de \pi_j(x,y)\\
        \label{eq:entconv}
        & = - \int \Big[\tau_{K,N'}^{(1-t)} (\di(x,y)\mp \varepsilon) \rho_0^{-\frac{1}{N'}}(x) + \tau_{K,N'}^{(t)} (\di(x,y)\mp \varepsilon) \rho_1^{-\frac{1}{N'}}(y)\Big]\,  \de \pi(x,y) .
    \end{split}
\end{equation}
Here the symbol $\mp$ denotes that the estimate holds with the minus if $K\geq 0$ and with the plus if $K<0$.  Notice that the first equality follows by properties (iii) and (iv) of the partition, because $\pi_j$ is concentrated on $P_j^\varepsilon \times T(P_j^\varepsilon)$, whereas the second inequality follows from the monotonicity properties of the distortion coefficients \eqref{eq:monotonicitytau} and the diameter bounds (ii).
%  Moreover, by definition $\spt(\Bar{\mu}_t^{\varepsilon,j})= D_t\big(P_j^\varepsilon, T(P_j^\varepsilon)\big)$ and then

Recall the definition of the measure $\bar \mu_t^{\eps,j}$, its density $\bar \rho_t^{\eps,j}$, and their properties, in particular \eqref{eq:propmubar} and \eqref{eq:proprhobar}. For every fixed $s \in [0,1]$, we can then define the measure
\begin{equation}
    \Tilde\eta_j^\varepsilon := \frac{1}{\m\big(D_s\big(P_j^\varepsilon, T(P_j^\varepsilon)\big)\big)}\cdot  \frac{\Bar\eta_j^\varepsilon(\de \gamma)}{\Bar{\rho}_s^{\varepsilon,j}(e_s(\gamma))}
        \in \sM_+(\Geo(\X)).
\end{equation}
By construction $(e_s)_\# \Tilde{\eta}_j^\varepsilon = \m_{D_s\big(P_j^\varepsilon, T(P_j^\varepsilon)\big)} $, which in particular shows that $ \Tilde\eta_j^\varepsilon$ is a probability measure. Moreover, $ \Tilde\eta_j^\varepsilon$ is concentrated on $\Geo(\X)$ and $(e_0,e_1)_\# \Tilde\eta_j^\varepsilon$ is concentrated on the same $\di^2$-cyclically monotone set as $(e_0,e_1)_\#\Bar{\eta}_j^\varepsilon$. For these reasons, we have that
\begin{equation}
    \Tilde\eta_j^\varepsilon \in \optgeo\big((e_0)_\# \Tilde\eta_j,(e_1)_\# \Tilde\eta_j \big)
    \, , \quad \forall j =1, \dots, L_\eps .
\end{equation}
On the other hand, the measures defined by $\nu_0^{\varepsilon,j}:=(e_0)_\# \Tilde\eta_j^\varepsilon$ and $\nu_1^{\varepsilon,j}:=(e_1)_\# \Tilde\eta_j^\varepsilon$ are concentrated on $P_j^\varepsilon$ and $T(P_j^\varepsilon)$ respectively. Hence, recalling that, for every $\mu,\nu \in \Prob_2(\X)$, it holds 
\begin{equation}
\label{eq.diamest}
    W_2 (\mu,\nu) \leq \text{diam}\big( \spt(\mu) \cup \spt(\nu)\big),
\end{equation} 
property (ii) of the partition and the triangle inequality allow us to conclude that
\begin{equation}\label{eq:approxofmarg}
\begin{split}
     W_2\big(\m_{P_j^\varepsilon}, \m_{D_s(P_j^\varepsilon, T(P_j^\varepsilon))}\big) & \leq W_2(\m_{P_j^\varepsilon}, \nu_0^{\varepsilon,j}) + W_2\big(\nu_0^{\varepsilon,j}, \m_{D_s(P_j^\varepsilon, T(P_j^\varepsilon))}\big) \\
    & \leq \varepsilon + s \cdot W_2(\nu_0^{\varepsilon,j}, \nu_1^{\varepsilon,j}) \\
    & \leq   3 \varepsilon + s \cdot W_2(\m_{P_j^\varepsilon}, \m_{T(P_j^\varepsilon)}),
\end{split}
\end{equation}
where we repeatedly used \eqref{eq.diamest}, and analogously 
\begin{equation}
    W_2\big(\m_{D_s(P_j^\varepsilon, T(P_j^\varepsilon))} ,\m_{T(P_j^\varepsilon)}\big) \leq   3 \varepsilon + (1-s) \cdot W_2(\m_{P_j^\varepsilon}, \m_{T(P_j^\varepsilon)}) ,
\end{equation}
for every $j=1, \dots,L_\varepsilon$. On the other hand, recalling that $ \mu_0= \sum_{j=1}^{L_\varepsilon}\mu_0(P_j^\varepsilon) \m_{P_j^\varepsilon} $ and $\Tilde\mu_s^\varepsilon=\sum_{j=1}^{L_\varepsilon} \mu_0(P_j^\varepsilon) \m_{D_s(P_j^\varepsilon, T(P_j^\varepsilon))}$, the convexity of $W_2^2$ gives us \begin{equation*}
    W_2^2(\mu_0,\Tilde\mu_s^\varepsilon) \leq \sum_{j=1}^{L_\varepsilon} \mu_0(P_j^\varepsilon) \cdot  W_2^2 \big(\m_{P_j^\varepsilon}, \m_{D_s(P_j^\varepsilon, T(P_j^\varepsilon))}\big).
\end{equation*}
In addition, thanks to the optimality of the map $T$ and the properties of the partition, cf.\ \eqref{eq:NONDIRE36}, we have that
\begin{align}
    \sum_{j=1}^{L_\varepsilon}\mu_0(P_j^\varepsilon) W_2^2(\m_{P_j^\varepsilon}, \m_{T(P_j^\varepsilon)}) = W_2^2(\mu_0,\mu_1) .
\end{align}
Hence, by summing \eqref{eq:approxofmarg} on $j$, we obtain the estimate
\begin{align}
     W_2^2(\mu_0,\Tilde\mu_s^\varepsilon) &\leq \sum_{j=1}^{L_\varepsilon}\mu_0(P_j^\varepsilon) \bigg( 3 \varepsilon + s \cdot W_2(\m_{P_j^\varepsilon}, \m_{T(P_j^\varepsilon)}) \bigg)^2\\
     \label{eq:tmidpoint1}
     &= 9 \varepsilon^2 + 6\varepsilon s \sum_{j=1}^{L_\varepsilon}\mu_0(P_j^\varepsilon) W_2(\m_{P_j^\varepsilon}, \m_{T(P_j^\varepsilon)}) + s^2 \sum_{j=1}^{L_\varepsilon}\mu_0(P_j^\varepsilon) W_2^2(\m_{P_j^\varepsilon}, \m_{T(P_j^\varepsilon)})\\
    %  &\leq   9 \varepsilon^2 + 6\varepsilon s D + s^2 \sum_{j=1}^{L_\varepsilon}\mu_0(P_j^\varepsilon) W_2^2(\m_{P_j^\varepsilon}, \m_{T(P_j^\varepsilon)}) \\
     &\leq  9 \varepsilon^2 + 6\varepsilon s D  + s^2 \cdot W_2^2(\mu_0,\mu_1),
\end{align}
where,in the third line, we introduced the quantity $D:= \text{diam}(\spt (\mu_0)\cup \spt(\mu_1))$ and applied \eqref{eq.diamest}.
Analogously, we have
\begin{equation}\label{eq:tmidpoint2}
\begin{split}
     W_2(\Tilde\mu_s^\varepsilon,\mu_1)\leq 9 \varepsilon^2 + 6\varepsilon (1-s) D  + (1-s)^2 \cdot W_2^2(\mu_0,\mu_1).
\end{split}
\end{equation}
Observe that we have proven \eqref{eq:tmidpoint1} and \eqref{eq:tmidpoint2} for every $s\in [0,1]$.

We now pass to the limit as $\varepsilon\to 0$. 
Notice that, since $\mu_0$ and $\mu_1$ have bounded support and the space $(\X,\di,\m)$ is proper by Lemma \ref{lem:bm_proper_geod_etc}, for every $t\in [0,1]$, all the measures in the family $\{\Tilde\mu_t^\varepsilon\}_{\varepsilon>0}$ are concentrated on a common compact set. In particular, for every fixed $t\in [0,1]$, the family $\{\Tilde\mu_t^\varepsilon\}_{\varepsilon>0}$ is $W_2$-precompact (see Section \ref{sec:prelim}). Thus we can find a sequence $\{\varepsilon_m\}_{m\in \N}$ converging to $0$ such that 
\begin{equation}
     \Tilde\mu_t^{\varepsilon_m} \xrightarrow{W_2} \mu_t \in \Prob_2(\X) \quad \text{as }m\to \infty.
\end{equation}
Now we can pass \eqref{eq:tmidpoint1} and \eqref{eq:tmidpoint2} to the limit as $m\to \infty$ and obtain that
\begin{equation}
     W_2(\mu_0,\mu_t)\leq t \cdot W_2(\mu_0,\mu_1) \quad\text{and}\quad W_2(\mu_t,\mu_1)\leq  (1-t) \cdot W_2(\mu_0,\mu_1).
\end{equation}
As a consequence, we deduce that $\mu_t$ is the unique $t$-midpoint between $\mu_0$ and $\mu_1$, and
\begin{equation}
     \Tilde\mu_t^{\varepsilon} \xrightarrow{W_2} \mu_t \in \Prob_2(\X) \quad \text{as }\eps\to 0,
\end{equation}
without extracting a subsequence. Repeating the argument for every $t\in [0,1]$, we deduce that the curve $t\mapsto\mu_t$ is the unique Wasserstein geodesic connecting $\mu_0$ and $\mu_1$. Then, we can pass to the limit \eqref{eq:entconv}, using the monotone convergence theorem for the right-hand side and the lower semicontinuity of $\E_{N'}$ for the left-hand side and obtain, for every $t\in [0,1]$ and $N'\geq N$,
\begin{equation}
     \E_{N'}(\mu_t) \leq - \int_{\X \times \X} \Big[ \tau^{(1-t)}_{K,N'} \big(\di(x,y) \big) \rho_{0}(x)^{-\frac{1}{N'}} +    \tau^{(t)}_{K,N'} \big(\di(x,y) \big) \rho_{1}(y)^{-\frac{1}{N'}} \Big]   \de \pi(x,y),
\end{equation}
which is the desired inequality. Note that we were able to exploit the lower semicontinuity of $\E_{N'}$, because for every $t\in [0,1]$ the measures of the family $\{\tilde \mu_t^\eps\}_{\eps>0} \cup \{\mu_t\}$ are concentrated on a common bounded set. 

So far we have proven that, if the optimal map between two step measures with bounded support is continuous, the unique geodesic connecting them satisfies the entropy convexity inequality \eqref{eq:CDcond}.  We now have to get rid of the assumption on the continuity of the optimal transport map $T$. If $T$ is not continuous, recalling that $\m$ is Radon thanks to Lemma \ref{lem:bm_proper_geod_etc}, we may apply Lusin theorem: for every $\epsilon>0$, there exists a compact set $A_\epsilon\subset \spt(\mu_0)$, such that $T$ is continuous on $A_\epsilon$ and $\mu_0(\spt(\mu_0) \setminus A_\epsilon)< \epsilon$. Then, define the measures 
\begin{equation*}
     \eta^\epsilon := \frac{\eta|_{e_0^{-1}(A_\epsilon)}}{\mu_0(A_\epsilon)} \in \Prob(\Geo(\X))  \quad \text{and}\quad  \mu_0^\epsilon:= (e_0)_\#  \eta,\  \mu_1^\epsilon:= (e_1)_\#  \eta \in \Prob^{ac}(\X,\m).
\end{equation*}
Note also that $ \eta^\epsilon$ is the unique optimal geodesic plan in $\optgeo(\mu_0^\epsilon, \mu_1^\epsilon)$. Moreover, exploiting the good transport behaviour as done in the first part of the proof, we deduce that the set $B_\epsilon:=T(A_\epsilon)\subset \spt(\mu_1)$ is such that $ \mu_1^\epsilon = \mu_1|_{B_\epsilon} / \mu_1(B_\epsilon)$, while by definition is clear that $ \mu_0 = \mu_0|_{A_\epsilon} / \mu_0(A_\epsilon)$. In particular, $\mu_0^\epsilon$ and $\mu_1^\epsilon$ are step measures with bounded support, $T|_{A_\epsilon}$ is the optimal map between them and it is continuous. Therefore \eqref{eq:CDcond} holds for $\mu_0^\epsilon$, $ \mu_1^\epsilon$ and $ \eta ^\epsilon$. Repeating the same argument used in the proof of Proposition \ref{prop:toprovecd}  replacing the sets $A_n$ and $B_n$ with $A_\epsilon$ and $B_\epsilon$, respectively, we can pass to the limit as $\epsilon \to 0$ and deduce that \eqref{eq:CDcond} holds for $\mu_0$, $\mu_1$ and $\eta$. This concludes the proof.
\end{proof}

At this point, we proceed by approximation and use Theorem \ref{thm:cdwithstep} to prove $\cd(K,N)$ for every pair of bounded marginals. The next two lemmas serve for this purpose, providing a suitable sequence of step measures converging to a given bounded measure and an upper semicontinuity result for the functional $T_{K,N}^{(t)}(\cdot|\m)$. 

\begin{lem}\label{lem:approxstepmeasures}
Let $\mu = \rho \m \in \Prob^{ac}(\X,\m)$ be bounded, then there exists a sequence of step measures $\{\mu_n= \rho_n \m\}_{n\in\N}$ $W_2$-convergent to $\mu$, such that $\{\mu_n\}_{n\in\N} \cup \{\mu\}$ is uniformly bounded and $\rho_n^{- 1/N'} \to \rho^{-1/N'}$ in $L^1(\m)$ for every $N'>1$.
\end{lem}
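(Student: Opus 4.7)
The plan is to approximate $\rho$ by conditional expectations with respect to a sequence of increasingly fine finite Borel partitions of $K:=\spt(\mu)$. Let $c,C>0$ satisfy $c\le\rho\le C$ $\m$-a.e. on $K$; under the assumptions of the paper $\spt(\m)$ is proper (cf. Lemma \ref{lem:bm_proper_geod_etc}), so the bounded closed set $K$ is compact. I would first construct, for each $n\in\N$, a finite Borel partition $\{P_j^n\}_{j=1}^{L_n}$ of $K$ with $\m(P_j^n)>0$ and $\diam(P_j^n)<1/n$, arranged so that $\{P_j^{n+1}\}_j$ refines $\{P_j^n\}_j$. This is straightforward on the totally bounded set $K$: cover $K$ by finitely many $1/n$-balls, form the induced disjoint partition, discard (or merge) the pieces of $\m$-measure zero, and intersect with the previous partition to ensure nestedness. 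Let $\mathcal{F}_n$ denote the $\sigma$-algebra generated by the $n$-th partition, so $\mathcal{F}_n\subset\mathcal{F}_{n+1}$.

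Next, I would set $\bar\rho_{n,j}:=\frac{1}{\m(P_j^n)}\int_{P_j^n}\rho\,\de\m$ and define the step density $\rho_n:=\sum_{j=1}^{L_n} \bar\rho_{n,j}\mathbf{1}_{P_j^n}$, $\mu_n:=\rho_n\m$. Directly from the definition, $\mu_n$ is a step measure, a probability measure (since $\int \rho_n\,\de\m=\sum_j\mu(P_j^n)=1$), and the averages inherit the bounds $c\le\rho_n\le C$ $\m$-a.e. on $K$; moreover $\spt(\mu_n)=K$, the inclusion $\subseteq$ being trivial and the reverse following from $\rho_n\ge c>0$ on $K$ together with $K=\spt(\mu)$. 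Hence $\{\mu_n\}_{n\in\N}\cup\{\mu\}$ is uniformly bounded in the sense of Definition \ref{def:bounded_meas}.

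The convergence $\rho_n\to\rho$ both $\m$-a.e. and in $L^1(\m)$ follows from the martingale convergence theorem applied to the filtration $\{\mathcal{F}_n\}$ on the finite measure space $(K,\m|_K)$: the shrinking diameters on the separable space $K$ force $\bigvee_n\mathcal{F}_n$ to contain every open subset of $K$, hence all Borel subsets of $K$, so the conditional expectations $\rho_n=\mathbb{E}_{\m|_K}[\rho\mid\mathcal{F}_n]$ converge as claimed. As a consequence, for every $f\in C_b(\X)$,
\begin{equation}
    \Bigl|\int f\,\de\mu_n-\int f\,\de\mu\Bigr|\le\|f\|_\infty\|\rho_n-\rho\|_{L^1(\m)}\longrightarrow 0,
\end{equation}
so $\mu_n\rightharpoonup\mu$ weakly; since the supports are contained in the bounded set $K$, this is equivalent to $W_2$-convergence, cf. Section \ref{sec:prelim}. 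Finally, the uniform bounds $C^{-1/N'}\le\rho_n^{-1/N'}\le c^{-1/N'}$ on $K$, together with the $\m$-a.e. convergence $\rho_n\to\rho$ and the continuity of $x\mapsto x^{-1/N'}$ on $[c,C]$, yield via dominated convergence on the finite-measure set $K$ the desired convergence $\rho_n^{-1/N'}\to\rho^{-1/N'}$ in $L^1(\m)$ for every $N'>1$.

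The only technical point worth highlighting is the construction of nested finite partitions whose generated $\sigma$-algebras exhaust the Borel structure of $K$; once this is in place, the conclusion follows from a combination of classical measure-theoretic results (martingale convergence, dominated convergence) and the fact that convergence of densities in $L^1(\m)$, together with uniformly bounded supports, implies $W_2$-convergence of the corresponding probability measures.
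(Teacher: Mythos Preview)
Your proof is correct and takes a genuinely different route from the paper. The paper approximates $\tilde\rho := \rho - c\chi_K$ from below in $L^1(\m)$ by nonnegative simple functions $\tilde\rho_n$ and then sets $\rho_n := (\tilde\rho_n + c\chi_K)/\|\tilde\rho_n + c\chi_K\|_{L^1(\m)}$; the shift by $c\chi_K$ and the renormalization are what enforce the lower bound and unit mass, and a subsequence is extracted to secure pointwise $\m$-a.e.\ convergence before applying dominated convergence. Your conditional-expectation (martingale) construction is more streamlined in that the two-sided bound $c\le\rho_n\le C$ and the unit mass $\int\rho_n\,\de\m=1$ are immediate from averaging, and the martingale convergence theorem delivers a.e.\ and $L^1$ convergence simultaneously without passing to a subsequence. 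The trade-off is that you rely on compactness of $K$ (via properness of $\spt(\m)$, which is available in the ambient setting of Section~\ref{sec:main_theorem} but is not part of the lemma's own hypotheses) to obtain \emph{finite} partitions of arbitrarily small diameter, whereas the paper's simple-function argument needs only $\m(K)<\infty$. Both approaches are valid; yours is structurally cleaner, the paper's slightly more elementary and portable.
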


\begin{proof}
Let $K\subset \X$ be the (compact) support of $\mu$ and $c>0$ as in Definition \ref{def:bounded_meas}. Let $\{\tilde \rho_n\}_{n\in \N}$ be a sequence of step functions such that $0 \leq \tilde \rho_n \leq \tilde \rho := \rho-c \chi_K$ and 
\begin{align*}
    \lim_{n \to \infty} \int \tilde \rho_n \de \m = \int \tilde \rho \de \m.
\end{align*}
We then define for every $n \in \N$, the measure $\mu_n := \rho_n \m \in \Prob^{ac}(\X,\m) $, where $\rho_n$ is the step function defined as follows: 
\begin{align*}
    \rho_n := \frac{\tilde \rho_n + c \chi_K} {\| \tilde \rho_n + c\chi_K  \|_{L^1(\m)}}.
\end{align*}
Note that $\| \tilde \rho_n + c \chi_K  \|_{L^1(\m)} \to \| \rho \|_{L^1(\m)} = 1$ and, in particular, we observe that
\begin{align*}
    \int 
    \left|
        \rho_n - \rho
    \right| 
        \de \m
\leq
    \| \tilde \rho_n - \tilde \rho \|_{L^1(\m)}
+
    \bigg( 
        \frac1{\| \tilde \rho_n + c\chi_K  \|_{L^1(\m)}}
            - 1
    \bigg)
    \| \tilde \rho_n + c\chi_K  \|_{L^1(\m)}
    \to 0 
\end{align*}
as $n \to \infty$, thus $\rho_n \to \rho$ in $L^1(\m)$. Moreover, the sequence $\{ \mu_n \}_{n\in \N}$ is by construction uniformly bounded, and in particular for every $N'\in \N$, the sequence $\{ \rho_n^{-1/N'} \}_{n \in \N}$ is uniformly bounded from below and above. Furthermore, since $\rho_n \to \rho$ in $L^1(\m)$, up to a (non-relabeled) subsequence, $\rho_n \to \rho$ pointwise $\m$-almost everywhere (see e.g. \cite[Theorem~3.12]{MR924157}). Trivially, this also shows that $\rho_n^{-1/N'} \to \rho^{-1/N'}$ pointwise $\m$-almost everywhere, hence an application of the dominated convergence theorem implies that $\rho_n^{-1/N'} \to \rho^{-1/N'}$ in $L^1(\m)$ and conclude the proof.
\end{proof}

\begin{lem}\label{lem:TKN}
Let $\mu_0= \rho_0 \m,\mu_1=\rho_1 \m \in \Prob^{ac}(\X,\m)$ be bounded and $\pi$ be the unique optimal transport plan between them. Let $\{\mu_0^n=\rho_0^n\m\}_{n\in \N}$, $\{\mu_1^n= \rho_1^n\m\}_{n\in \N}\subset \Prob^{ac}(\X,\m)$ be the approximating sequences provided by Lemma \ref{lem:approxstepmeasures}. Then, letting $\pi_n$ be the unique optimal transport plan between $\mu_0^n$ and $\mu_1^n$, it holds that 
\begin{equation}
    \limsup_{n\to \infty} T^{(t)}_{K,N'}(\pi_n|\m) \leq T^{(t)}_{K,N'}(\pi|\m),
\end{equation}
for every $K\in \R$, $N'>1$ and $t\in[0,1]$.
\end{lem}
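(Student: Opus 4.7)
The plan is to prove the stronger statement $T^{(t)}_{K,N'}(\pi_n|\m) \to T^{(t)}_{K,N'}(\pi|\m)$, which immediately yields the claimed $\limsup$ inequality. The argument rests on two ingredients. First, the weak convergence $\pi_n \rightharpoonup \pi$: by Lemma \ref{lem:approxstepmeasures}, the families $\{\mu_0^n\}\cup\{\mu_0\}$ and $\{\mu_1^n\}\cup\{\mu_1\}$ are uniformly bounded in the sense of Definition \ref{def:bounded_meas}, and Lemma \ref{lem:bm_proper_geod_etc} gives that the ambient space is proper, so the common supports of the marginals are compact. Hence the plans $\{\pi_n\}$ are all concentrated on a common compact subset of $\X\times\X$ and form a weakly pre-compact family. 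Any weak cluster point has marginals $\mu_0,\mu_1$ (since $\mu_0^n \to \mu_0$ and $\mu_1^n \to \mu_1$ in $W_2$) and is $\di^2$-optimal by lower semicontinuity of the quadratic cost. Since the strong interpolation property (Corollary \ref{cor:BMnonfaschifo}) guarantees uniqueness of the optimal plan, we conclude $\pi_n \rightharpoonup \pi$.

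It then suffices to establish convergence of the first summand in $T^{(t)}_{K,N'}$,
\begin{equation}
    \int \tau^{(1-t)}_{K,N'}(\di(x,y)) (\rho_0^n)^{-1/N'}(x) \de\pi_n(x,y) \longrightarrow \int \tau^{(1-t)}_{K,N'}(\di(x,y)) \rho_0^{-1/N'}(x) \de\pi(x,y),
\end{equation}
the analogous convergence of the $\rho_1$ term following by symmetry. Splitting the difference as
\begin{equation}
    \int \tau^{(1-t)}_{K,N'}(\di)\big[(\rho_0^n)^{-1/N'} - \rho_0^{-1/N'}\big]\de\pi_n \;+\; \int \tau^{(1-t)}_{K,N'}(\di)\,\rho_0^{-1/N'}\,\de(\pi_n - \pi),
\end{equation}
and denoting by $C$ a uniform bound on $\tau^{(1-t)}_{K,N'}$ over the bounded distances in play and by $C'$ the uniform upper bound on $\rho_0^n$ coming from Definition \ref{def:bounded_meas}, we estimate the first summand by $CC' \|(\rho_0^n)^{-1/N'} - \rho_0^{-1/N'}\|_{L^1(\m)}$, which tends to zero by Lemma \ref{lem:approxstepmeasures}.

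The main obstacle is the second summand, since $\rho_0^{-1/N'}$ is only bounded and Borel, so the weak convergence of $\pi_n$ does not apply directly. To bypass this, recall that $\m$ is Radon (Lemma \ref{lem:bm_proper_geod_etc}) and invoke Lusin's theorem: for every $\varepsilon > 0$, there exists a continuous function $u_\varepsilon\colon \X \to \R$ with $\|u_\varepsilon\|_\infty \leq \|\rho_0^{-1/N'}\|_\infty$ and $\m(\{u_\varepsilon \neq \rho_0^{-1/N'}\}) < \varepsilon$. The product $(x,y)\mapsto \tau^{(1-t)}_{K,N'}(\di(x,y)) u_\varepsilon(x)$ is then bounded and continuous on the common compact support, so $\int \tau^{(1-t)}_{K,N'}(\di)\, u_\varepsilon\, \de(\pi_n - \pi) \to 0$ as $n\to\infty$ by the weak convergence $\pi_n \rightharpoonup \pi$. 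The residual $\int \tau^{(1-t)}_{K,N'}(\di)(\rho_0^{-1/N'} - u_\varepsilon)\,\de(\pi_n - \pi)$ is bounded in absolute value by $4CC' \|\rho_0^{-1/N'}\|_\infty\, \varepsilon$ uniformly in $n$, using $|\rho_0^{-1/N'} - u_\varepsilon| \leq 2\|\rho_0^{-1/N'}\|_\infty\, \chi_{\{u_\varepsilon \neq \rho_0^{-1/N'}\}}$ together with the marginal identities $(\p_1)_\# \pi_n = \rho_0^n\m$ and $(\p_1)_\# \pi = \rho_0\m$. Letting first $n\to\infty$ and then $\varepsilon \to 0$ completes the proof.
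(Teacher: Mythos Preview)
Your argument is essentially sound, and in spirit close to the paper's, but there is one genuine gap: you assert the existence of ``a uniform bound $C$ on $\tau^{(1-t)}_{K,N'}$ over the bounded distances in play.'' This is not justified for $K>0$. Indeed, $\tau^{(1-t)}_{K,N'}(\theta)\to+\infty$ as $\theta\uparrow\pi\sqrt{(N'-1)/K}$, and since the lemma is stated for \emph{every} $K\in\R$ and $N'>1$, nothing prevents the diameter of $\spt(\mu_0)\cup\spt(\mu_1)$ from reaching or exceeding this threshold. In that regime both of your error estimates --- the $CC'\|(\rho_0^n)^{-1/N'}-\rho_0^{-1/N'}\|_{L^1(\m)}$ bound and the $4CC'\|\rho_0^{-1/N'}\|_\infty\,\varepsilon$ bound --- collapse, and your claimed \emph{convergence} (rather than the one-sided $\limsup$ inequality) may in fact fail.

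The paper handles this by proving only the $\liminf$ inequality for each summand and inserting a truncation step: it replaces $\tau^{(1-t)}_{K,N'}(\di(x,y))$ by $f_M:=\tau^{(1-t)}_{K,N'}(\di(x,y))\wedge M$, runs your type of argument with $f_M$ in place of $\tau$ (now legitimately bounded), and then sends $M\to\infty$ via monotone convergence. This recovers the full $\limsup$ inequality even when $T^{(t)}_{K,N'}(\pi|\m)=-\infty$. Your proof is easily repaired the same way; once you do that, the remaining differences from the paper --- you invoke Lusin's theorem where the paper uses $L^1$-density of $C_b(\X)$, and you aim at two-sided convergence rather than a one-sided bound --- are cosmetic.
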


\begin{proof}
We follow a strategy similar to the one developed in \cite[Proposition 4.10]{MRS}.
Recall that the sequence $\{\pi_n\}_{n\in \N}$ weakly converges to $\pi\in\opt(\mu_0,\mu_1)$ (see for example \cite[Theorem 6.8]{MR4294651}). Moreover, we observe that it is sufficient to prove
\begin{equation}\label{eq:lscinproof}
    \liminf_{n\to \infty}\int \tau^{(1-t)}_{K,N'} \big(\di(x,y) \big) \rho_0^n(x)^{-\frac{1}{N'}}    \de \pi_n(x,y) \geq \int \tau^{(1-t)}_{K,N'} \big(\di(x,y) \big) \rho_0(x)^{-\frac{1}{N'}}    \de \pi(x,y),
\end{equation}
since the other term can be treated analogously. The space of continuous and bounded functions $C_b(\X)$ is dense in $L^1(\m)$ (see for example \cite[Theorem 3.14]{MR924157}), thus for every $\varepsilon>0$ we can find $g^\varepsilon\in C_b(\X)$ such that $||\rho_0^{-1/N'} - g^\varepsilon ||_{L^1(\m)}<\varepsilon$. Furthermore, for every $n\in\N$ big enough we have that $||(\rho_0^n)^{-1/N'} - g^\varepsilon ||_{L^1(\m)}<2\varepsilon$. In general, the function $(x,y) \mapsto \tau^{(1-t)}_{K,N'} \big(\di(x,y) \big)$ is continuous but not bounded, for this reason, for every $M >0$ we introduce the function
\begin{equation}
    f_M(x,y)= \tau^{(1-t)}_{K,N'} \big(\di(x,y) \big) \wedge M.
\end{equation}
The function $f_M$ is continuous and bounded above by $M$ and therefore 
\begin{equation}
\begin{split}
    \liminf_{n\to \infty} \int f_M (x,y)& \rho_0^n(x)^{-\frac{1}{N'}}    \de \pi_n(x,y) \\
    &\geq \liminf_{n\to \infty} \int f_M (x,y) g^\varepsilon(x)    \de \pi_n(x,y) - M \int |g^\varepsilon-(\rho_0^n)^{-1/N'} |  \de \mu_0^n\\
    &\geq \liminf_{n\to \infty} \int f_M (x,y) g^\varepsilon(x)    \de \pi_n(x,y) -C M \int |g^\varepsilon-(\rho_0^n)^{-1/N'} |  \de \m \\
    &\geq \liminf_{n\to \infty} \int f_M (x,y) g^\varepsilon(x)    \de \pi_n(x,y) -2 \varepsilon C M  \\
    &= \int f_M (x,y) g^\varepsilon(x)    \de \pi(x,y) -2 \varepsilon C M \\
    &\geq \int f_M(x,y) \rho_0(x)^{-\frac{1}{N'}} \de \pi(x,y)- 3 \varepsilon CM,
\end{split}
\end{equation}
where the equality holds because $(x,y)\mapsto f_M (x,y) g^\varepsilon(x)$ is continuous and bounded and $\pi_n\rightharpoonup \pi$, while the constant $C>0$ represents the uniform upper bound on $\{\rho_0^n\}_{n\in\N} \cup \{\rho_0\}$. Since this last inequality holds for every $\varepsilon>0$, we can conclude that 
\begin{equation}
    \liminf_{n\to \infty} \int f_M (x,y) \rho_0^n(x)^{-\frac{1}{N'}}    \de \pi_n(x,y) \geq \int f_M(x,y) \rho_0(x)^{-\frac{1}{N'}} \de \pi(x,y).
\end{equation}
Taking into account this semicontinuity property we deduce that for every $M>0$
\begin{equation}
\begin{split}
     \liminf_{n\to \infty}\int \tau^{(1-t)}_{K,N'} \big(\di(x,y) \big) \rho_0^n(x)^{-\frac{1}{N'}}    \de \pi_n(x,y) &\geq \liminf_{n\to \infty} \int f_M (x,y) \rho_0^n(x)^{-\frac{1}{N'}}    \de \pi_n(x,y)\\ &\geq \int f_M(x,y) \rho_0(x)^{-\frac{1}{N'}} \de \pi(x,y),
\end{split}     
\end{equation}
taking now the limit as $M \to \infty$, the monotone convergence theorem allows to prove \eqref{eq:lscinproof}, concluding the proof.
\end{proof}

\begin{proof}[Proof of Theorem \ref{thm:sbmtocd}.] According to Proposition \ref{prop:toprovecd}, we can limit ourselves to prove the $\cd(K,N)$ condition for bounded marginals. On the other hand, for every pair of bounded marginals $\mu_0,\mu_1\in \Prob^{ac}(\X,\m)$, there exist two approximating sequences $\{\mu_0^n\}_{n \in \N}$ and $\{\mu_1^n\}_{n \in \N}$ satisfying the requirements of Lemma \ref{lem:approxstepmeasures}. For every $n\in \N$ call $\eta_n$ the unique optimal geodesic plan in $\optgeo(\mu_0^n,\mu_1^n)$, let $\pi_n:= (e_0,e_1)_\# \eta_n$ and $\mu_t^n:=(e_t)_\# \eta_n$. For every $n\in \N$, $\mu_0^n$ and $\mu_1^n$ are step measures with bounded support, thus Theorem \ref{thm:cdwithstep} ensures that 
\begin{equation}\label{eq:CDstepapprox}
    \E_{N'}(\mu_t^n) \leq T_{K,N'}^{(t)} (\pi_n | \m) \qquad \text{for every }N'\geq N \text{ and } t\in [0,1].
\end{equation}
Now we want to pass to the limit as $n \to \infty$. 
Notice that, since the families $\{\mu_0^n\}_{n\in\N} \cup \{\mu_0\}$ and $\{\mu_1^n\}_{n\in\N} \cup \{\mu_1\}$ are uniformly bounded and the space $(\X,\di,\m)$ is proper, for every fixed $t\in [0,1]$, all the measures in the family $\{\mu_t^n\}_{n \in \N}$ are concentrated on the same compact set. In particular, for every fixed $t\in [0,1]$, the family $\{\mu_t^n\}_{n \in \N}$ is $W_2$-precompact. We can then extract a (non-relabeled) subsequence such that 
\begin{equation*}
     \mu_t^n \xrightarrow{W_2} \mu_t \in \Prob_2(\X) \quad \text{as }n\to \infty.
\end{equation*}
For every $n\in\N$, the measure $\mu_t^n$ is a $t$-midpoint between $\mu_0^n$ and $\mu_1^n$, moreover $\mu_0^n \to \mu_0$ and  $\mu_1^n \to \mu_1$ with respect to $W_2$, thus $\mu_t$ is the unique $t$-midpoint between $\mu_0$ and $\mu_1$. We can then pass \eqref{eq:CDstepapprox} to the limit as $n\to \infty$, thanks to the lower semicontinuity of the entropy functional $\E_{N'}$ and to Lemma \ref{lem:TKN}, and obtain
\begin{equation*}
     \E_{N'}(\mu_t) \leq T_{K,N'}^{(t)} (\pi | \m), \qquad \text{for every }N'\geq N \text{ and } t\in [0,1],
\end{equation*}
which is \eqref{eq:CDcond}. This concludes the proof.
\end{proof}

\section{Final comments}
\label{sec:finndelany_grandeacquisto}

\subsection{The $\bm(K,N)$ inequality and the $\mcp(K,N)$ condition}

In this section, we explore the relation between the (strong) Brunn--Minkowski inequality and the so-called measure contraction property. We recall here its definition, firstly introduced in \cite{MR2341840}, and an equivalent characterization proved therein. 

\begin{defn}[$\mcp(K,N)$ condition]
Given $K\in\R$ and $N>1$, a metric measure space $(\X,\di,\m)$ is said to satisfy the \emph{measure contraction property} $\mcp(K,N)$ if for every $x\in\spt(\m)$ and a Borel set $A\subset\X$ with $0<\m(A)<\infty$, there exists $\eta\in\optgeo(\delta_x,\m_A)$ such that, for every $t\in[0,1]$,
\begin{equation}
    \frac{1}{\m(A)}\m\geq(e_t)_\#\Big(\tau_{K,N}^{(t)}\big(\di(\gamma_0,\gamma_1)\big)^N\eta(\de\gamma)\Big).
\end{equation}
\end{defn}

\begin{lem}[{\cite[Lemma 2.3]{MR2341840}}]\label{lem:ohta}
Let $(\X,\di,\m)$ be a metric measure space. Assume that, for every $x\in\spt(\m)$ and $A\subset\X$ a Borel set with $0<\m(A)<\infty$, there exists a measurable selection $\Phi\colon A\rightarrow \Geo(\X)$ satisfying $e_0\circ\Phi\equiv x$ and $e_1\circ\Phi=\mathrm{id}_A$ such that, for any Borel $ A'\subset A$,
\begin{equation}
\label{eq:tj_pantaloncini}
    \m\big(e_t(\Phi(A'))\big)\geq \int_{ A'}\tau_{K,N}^{(t)}(\di(x,y))^N \de\m(y).
\end{equation}
Then, $(\X,\di,\m)$ satisfies the $\mcp(K,N)$ condition.
\end{lem}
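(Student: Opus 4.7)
The plan is to construct the required optimal geodesic plan directly from the measurable selection provided by the hypothesis. Fix $x\in\spt(\m)$ and a Borel set $A\subset\X$ with $0<\m(A)<\infty$, and let $\Phi\colon A\to\Geo(\X)$ be the associated selection. Set
\begin{equation}
\eta := \frac{1}{\m(A)}\,\Phi_\#(\m|_A) \in \Prob(\Geo(\X)).
\end{equation}
The assumptions $e_0\circ\Phi\equiv x$ and $e_1\circ\Phi=\mathrm{id}_A$, together with the pushforward identity, immediately give $(e_0)_\#\eta=\delta_x$ and $(e_1)_\#\eta=\m_A$. Moreover, since the source marginal is a Dirac mass, the only admissible coupling between $\delta_x$ and $\m_A$ is $\delta_x\otimes\m_A$; this coupling is trivially $\di^2$-cyclically monotone, hence $\eta\in\optgeo(\delta_x,\m_A)$.

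Next, I would verify the $\mcp(K,N)$ inequality by testing it against an arbitrary Borel set $B\subset\X$. Setting $A':=A\cap(e_t\circ\Phi)^{-1}(B)$, which is Borel by measurability of $\Phi$ and continuity of $e_t$, the definition of $\eta$ and the pushforward formula yield
\begin{equation}
\int \mathbf{1}_B(\gamma_t)\,\tau_{K,N}^{(t)}(\di(\gamma_0,\gamma_1))^N\,\eta(\de\gamma)
    = \frac{1}{\m(A)}\int_{A'}\tau_{K,N}^{(t)}(\di(x,y))^N\,\de\m(y).
\end{equation}
By the hypothesis \eqref{eq:tj_pantaloncini} applied to $A'$, the right-hand side is bounded by $\frac{1}{\m(A)}\m(e_t(\Phi(A')))$; and since $e_t(\Phi(A'))\subset B$ by the definition of $A'$, this is in turn bounded by $\frac{1}{\m(A)}\m(B)$. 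As $B$ is arbitrary, a standard monotone class argument promotes this pointwise bound on integrals of indicators to the measure inequality
\begin{equation}
\frac{1}{\m(A)}\m\geq (e_t)_\#\Big(\tau_{K,N}^{(t)}(\di(\gamma_0,\gamma_1))^N\,\eta(\de\gamma)\Big),
\end{equation}
which is exactly the defining inequality of $\mcp(K,N)$.

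The only (mild) technical point is the measurability of the image set $e_t(\Phi(A'))$, which in general is merely analytic; however, the hypothesis already treats its $\m$-measure as well-defined, implicitly through the outer measure $\bar\m$ used earlier in the paper, and this is sufficient for the Borel bound above. Conceptually, the content of the lemma is very light: the Dirac nature of the source trivializes optimality, so the entire argument reduces to unwinding the pushforward under $\Phi$ and invoking the assumed selection estimate \eqref{eq:tj_pantaloncini}.
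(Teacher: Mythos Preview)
The paper does not actually prove this lemma: it is stated with a citation to \cite[Lemma~2.3]{MR2341840} and used as a black box in the proof of Proposition~\ref{prop:capitantadda}. So there is no in-paper argument to compare against.

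Your proof is correct and is essentially the standard argument behind Ohta's lemma. A couple of minor remarks: the ``monotone class argument'' you invoke is superfluous, since the inequality $\nu(B)\le \tfrac{1}{\m(A)}\m(B)$ for every Borel $B$ is already the measure inequality $\nu\le \tfrac{1}{\m(A)}\m$; and on the measurability of $e_t(\Phi(A'))$, being the image of a Borel set under a Borel map it is analytic, hence universally measurable, so its $\m$-measure (via the completion, or equivalently the outer measure $\bar\m$) is well defined and the monotonicity $e_t(\Phi(A'))\subset B\Rightarrow \bar\m(e_t(\Phi(A')))\le \m(B)$ goes through. These are cosmetic; the substance of your argument is sound.
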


\begin{prop}\label{prop:capitantadda}
Let $(\X,\di,\m)$ be an essentially non-branching metric measure space supporting the Brunn--Minkowski inequality $\bm(K,N)$. Then, $(\X,\di,\m)$ has the measure contraction property $\mcp(K,N)$.
\end{prop}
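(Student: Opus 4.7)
The plan is to verify the $\mcp(K,N)$ condition through Ohta's Lemma \ref{lem:ohta} by constructing a suitable measurable selection of geodesics. Since $(\X,\di,\m)$ supports $\bm(K,N)$, Lemma \ref{lem:bm_proper_geod_etc} ensures that $(\spt(\m),\di)$ is proper and geodesic with $\m$ Radon, and the proof of Corollary \ref{cor:BMnonfaschifo}---which relies only on $\bm(K,N)$ and not on its strong version---yields the good transport behavior and the strong interpolation property. Fix $x\in\spt(\m)$ and a Borel set $A\subset\X$ with $0<\m(A)<\infty$. Applying the strong interpolation property to the (absolutely continuous) pair $(\m_A,\delta_x)$ produces a unique $\tilde\eta\in\optgeo(\m_A,\delta_x)$ induced by a Borel map $\Psi\colon A\to\Geo(\X)$; setting $\Phi(y)(s):=\Psi(y)(1-s)$ yields the desired Borel selection of geodesics from $x$ to the points of $A$. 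Denote $\eta:=\Phi_\#\m_A\in\optgeo(\delta_x,\m_A)$ and, for a fixed $t\in(0,1)$, $F:=e_t\circ\Phi$; the strong interpolation property applied to $\tilde\eta$ gives $\mu_s:=(e_s)_\#\eta\ll\m$ for every $s\in(0,1]$.

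Two preliminary observations are the crux of the argument. First, for $\m$-a.e.\ $y\in A$ the geodesic $\Phi(y)$ is the unique geodesic from $x$ to $y$: by the Kuratowski--Ryll-Nardzewski selection theorem, a positive-measure set of $y$'s admitting alternative geodesics would induce a second distinct element of $\optgeo(\m_A,\delta_x)$, contradicting uniqueness. Let $N\subset A$ denote the corresponding $\m$-null exceptional set (possibly enlarged as needed below). Second, the map $F$ is injective on $A\setminus N$: this relies on the essentially non-branching property applied to the time-restriction $\eta^{(t',1)}$ of $\eta$ (reparametrized on $[0,1]$) for any $t'\in(0,t)$, whose marginals $\mu_{t'}$ and $\m_A$ are both absolutely continuous. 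Forward branching of $\Phi$ on $[0,t]$ is thereby excluded: two distinct $\Phi(y_1),\Phi(y_2)$ agreeing on $[0,t]$ would, once restricted to $[t',1]$, violate non-branching of $\eta^{(t',1)}$. Hence if $F(y_1)=F(y_2)=z$ for distinct $y_1,y_2\in A\setminus N$, the absence of forward branching yields $\Phi(y_1)|_{[0,t]}\neq\Phi(y_2)|_{[0,t]}$, so these are two distinct geodesics from $x$ to $z$; concatenating $\Phi(y_1)|_{[0,t]}$ with $\Phi(y_2)|_{[t,1]}$ (a valid geodesic thanks to the speed identity $\di(x,y_1)=\di(x,y_2)=\di(x,z)/t$ implied by $F(y_1)=F(y_2)$) produces a second distinct geodesic from $x$ to $y_2$, contradicting $y_2\notin N$.

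With these tools in place, the Ohta inequality $\m(e_t(\Phi(A')))\geq\int_{A'}\tau^{(t)}_{K,N}(\di(x,y))^N\de\m(y)$ for a Borel $A'\subset A$ and $t\in(0,1)$ follows from a partition argument. Fix $\epsilon>0$ and decompose the bounded set $A'\setminus N$ (bounded thanks to the properness of $\spt(\m)$) into finitely many disjoint Borel pieces $\{A'_j\}_j$ on which $\di(x,\cdot)$ oscillates by at most $\epsilon$. By the preceding step, $M_t(x,A'_j)=F(A'_j)$ and the images $\{F(A'_j)\}_j$ are pairwise disjoint. Applying $\bm(K,N)$ to the pair $(\{x\},A'_j)$ (after the inner-regular approximation used in the proof of Proposition \ref{prop:nemmenoSBM} leading to \eqref{eq:bm_point_compact}), summing over $j$ using disjointness and the monotonicity \eqref{eq:monotonicitytau}, one obtains
\begin{equation}
\m(e_t(\Phi(A')))\geq\sum_j\m(F(A'_j))\geq\sum_j\tau^{(t)}_{K,N}(\Theta(x,A'_j))^N\m(A'_j),
\end{equation}
whose right-hand side converges, as $\epsilon\to0$, to $\int_{A'}\tau^{(t)}_{K,N}(\di(x,y))^N\de\m(y)$ by dominated convergence. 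Together with the trivial endpoints $t\in\{0,1\}$, Lemma \ref{lem:ohta} yields $\mcp(K,N)$. The main delicate point of the argument is the injectivity of $F$, for which the application of essentially non-branching to the restricted plan $\eta^{(t',1)}$ is crucially enabled by the absolute continuity $\mu_{t'}\ll\m$ supplied by the strong interpolation property.
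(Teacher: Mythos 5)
Your proof is correct and follows the paper's approach closely: verify $\mcp(K,N)$ through Ohta's Lemma \ref{lem:ohta} by building the measurable selection $\Phi$ from the unique optimal geodesic plan in $\optgeo(\m_A,\delta_x)$ (via Corollary \ref{cor:BMnonfaschifo}), partition $A'$ into pieces of small $\di(x,\cdot)$-oscillation, apply $\bm(K,N)$ on each piece, sum using disjointness of the images at time $t$, and let $\eps\to 0$. The one genuine methodological difference is your detour through the injectivity of $F:=e_t\circ\Phi$ on $A\setminus N$. The concatenation argument you give is correct (and is in the spirit of what the paper invokes via \cite[Proposition~2.7]{MagnaboscoRigoni21} inside the proof of Theorem \ref{thm:cdwithstep}), but it is unnecessary here: since each piece $A'_j$ lies in a distance annulus $\{\,n\eps<\di(x,\cdot)\leq(n+1)\eps\,\}$, the triangle inequality forces every $z\in M_t(x,A'_j)$ to satisfy $\di(x,z)\in(t n\eps, t(n+1)\eps]$, so the sets $M_t(x,A'_j)$ (hence the $F(A'_j)\subset M_t(x,A'_j)$) are automatically pairwise disjoint without any non-branching input. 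The paper uses exactly this one-line observation and thereby avoids the non-branching machinery entirely in this proof. Two smaller remarks: you assert that $A'\setminus N$ is bounded "thanks to the properness of $\spt(\m)$", but properness does not bound an arbitrary finite-measure Borel set, so your partition should be countable rather than finite (the paper indexes the annuli over $n\in\N$ precisely for this reason, and the summation argument goes through unchanged); and in the final limit $\eps\to 0$, Fatou's lemma already gives what is needed, sidestepping the question of an integrable majorant for dominated convergence.
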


\begin{proof}
Let $x\in\spt(\m)$ and $A\subset\X$ a Borel set with $0<\m(A)<\infty$; we can assume, without loss of generality, that $A\subset\spt(\m)$. Then, applying Corollary \ref{cor:BMnonfaschifo}, and in particular the strong interpolation property to the marginals $\m_A$ and $\delta_x$, there exists a unique geodesic connecting $a$ and $x$, for $\m$-a.e.\ $a\in A$. Thus, we have a well-defined measurable selection $\Phi$ satisfying the requirements of Lemma \ref{lem:ohta} and 
\begin{equation}
    \m\big(M_t(x,A')\big)=\m\big(e_t(\Phi(A'))\big),\qquad\text{for every Borel }A'\subset A.
\end{equation}
Let $\eps>0$ and $A'\subset A$ be a Borel set. Define a partition $\{A_n^\eps\}_{n\in\N}$ of $A'$ as follows:
\begin{equation*}
    A_n^\eps:= A' \cap C_n^\eps\qquad\text{where}\qquad C_n^\eps:= \{y \in \X \,:\, n \eps < \di(y,x)\leq (n+1)\eps\}.
\end{equation*}
Applying the $\bm(K,N)$ inequality for the sets $\{x\}$ and $A_n^\eps$ we obtain that
\begin{equation}\label{eq:jeremymorgan}
    \begin{split}
        \m \big(M_t(x,A_n^\eps)\big) &\geq \tau_{K,N}^{(t)}\big(\Theta(x,A_n^\eps)\big)^N \m (A_n^\eps)\\
        &= \int_{A_n^\eps} \tau_{K,N}^{(t)}\big(\Theta(x,A_n^\eps)\big)^N \de \m(z) \geq \int_{A_n^\eps}  \tau_{K,N}^{(t)}( \di(x,z) \mp \eps)^N \de \m(z).
    \end{split}
\end{equation}
Note that $M_t(x,A_n^\eps)\cap M_t(x,A_m^\eps)\ne \emptyset$ whenever $n \ne m$, since by construction every $z \in M_t(x,A_n^\eps)$ is such that $\di (x,z) \in (t  n \eps, t  (n+1) \eps]$. Therefore, we can sum the inequalities \eqref{eq:jeremymorgan} over all $n\in\N$, obtaining 
\begin{equation}
    \m \big(M_t(x,A')\big) \geq \int_{A'}  \tau_{K,N}^{(t)}( \di(x,z) \mp \eps)^N \de \m(z).
\end{equation}
Passing to the limit as $\eps\to 0$ and using Fatou lemma, we deduce inequality \eqref{eq:tj_pantaloncini} for $A'$. By the arbitrariness of $t\in (0,1]$ and $A'\subset A$, we conclude the proof. 
\end{proof}

Proposition \ref{prop:capitantadda} shows that the $\bm(K,N)$ inequality, intended as a curvature dimension bound, is stronger than the $\mcp(K,N)$ condition. The heuristic reason behind this difference is that, while the $\bm(K,N)$ inequality controls the behavior of the set geodesics joining \emph{any} two sets, $\mcp(K,N)$ controls only the interpolation between a constant density measure and a Dirac delta (which, in the case of essentially non-branching spaces, corresponds to a control on geodesics spreading out from a point). This is also confirmed by the existence of weighted Riemannian manifolds where $\mcp(K,N)$ does not imply $\bm(K,N)$, with the same (sharp) constants. In conclusion, the Brunn--Minkowski inequality $\bm(K,N)$ is closer to the $\cd(K,N)$ condition than the measure contraction property $\mcp(K,N)$ is.

\subsection{Relation between $M_t(A,B)$ and $D_t(A,B)$}
\label{sec:Dt_VS_Mt_emph}

In Theorem \ref{thm:sbmtocd}, we proved that $\sbm(K,N)$ is equivalent to $\cd(K,N)$, for essentially non-branching metric measure spaces. In principle, we would like to improve the equivalence, including $\bm(K,N)$, as shown for weighted Riemannian manifolds in \cite{seminalpaper}. The strategy proposed in the proof of Theorem \ref{thm:cdwithstep} could be adapted to deduce stronger the implication $\bm(K,N)\Rightarrow\cd(K,N)$, if we were able to control the difference between (the measure of) the sets $M_t(A,B)$ and $D_t(A,B)$. In particular, we can not expect them to be equal for any couple of sets, also in elementary examples. Consider for instance the metric measure space $(\R^2,|\cdot|,\mathscr{L}^2)$ and the sets $A,B\subset\R^2$, as in Figure \ref{fig:graffi_di_leone}.
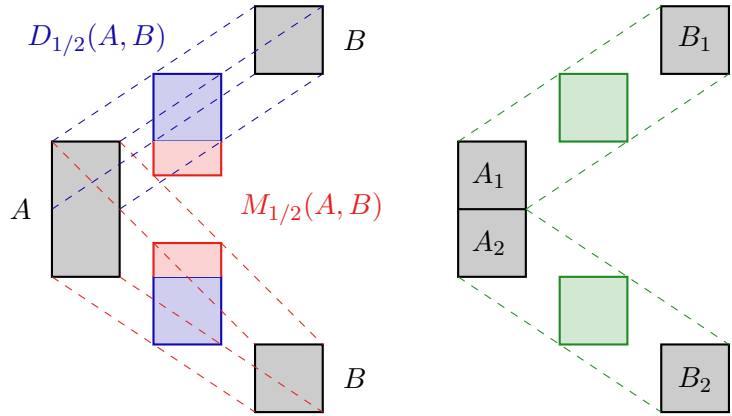
\begin{figure}[h]
\begin{center}
\hspace{-30pt}
\begin{tikzpicture} [scale=0.9]

\filldraw[color=black, fill=black!20,thick](-5,-1)--(-4,-1)--(-4,1)--(-5,1)--cycle;
\filldraw[color=black, fill=black!20,thick](-2,2)--(-1,2)--(-1,3)--(-2,3)--cycle;
\filldraw[color=black, fill=black!20,thick](-2,-2)--(-1,-2)--(-1,-3)--(-2,-3)--cycle;
\filldraw[color=black, fill=black!20,thick](1,-1)--(2,-1)--(2,1)--(1,1)--cycle;
\filldraw[color=black, fill=black!20,thick](4,2)--(5,2)--(5,3)--(4,3)--cycle;
\filldraw[color=black, fill=black!20,thick](4,-2)--(5,-2)--(5,-3)--(4,-3)--cycle;
\filldraw[color=myblue,fill=myblue!20, thick](-3.5,1)--(-2.5,1)--(-2.5,2)--(-3.5,2)--cycle;
\filldraw[color=myblue,fill=myblue!20, thick](-3.5,-1)--(-2.5,-1)--(-2.5,-2)--(-3.5,-2)--cycle;
\filldraw[color=myred,fill=myred!20, thick](-2.5,-1)--(-2.5,-0.5)--(-3.5,-0.5)--(-3.5,-1);
\filldraw[color=myred,fill=myred!20, thick](-2.5,1)--(-2.5,0.5)--(-3.5,0.5)--(-3.5,1);
\draw[dashed, color=myred] (-5,1)--(-2,-2);
\draw[dashed, color=myred] (-4,1)--(-1,-2);
\draw[dashed, color=myred] (-5,-1)--(-2,-3);
\draw[dashed, color=myred] (-4,-1)--(-1,-3);
\draw[dashed, color=myblue] (-5,0)--(-2,2);
\draw[dashed, color=myblue] (-4,0)--(-1,2);
\draw[dashed, color=myblue] (-5,1)--(-2,3);
\draw[dashed, color=myblue] (-4,1)--(-1,3);
\node at (-5,0)[label=west:$A$] {};
\node at (-1,2.5)[label=east:$B$] {};
\node at (-1,-2.5)[label=east:$B$] {};
\node at (-3,2.5)[label=west:{\color{myblue}$D_{1/2}(A,B)$}] {};
\node at (0.2,0)[label=west:{\color{myred}$M_{1/2}(A,B)$}] {};
\draw[thick](1,0)--(2,0);

\filldraw[color=mygreen,fill=mygreen!20, thick](3.5,1)--(2.5,1)--(2.5,2)--(3.5,2)--cycle;
\filldraw[color=mygreen,fill=mygreen!20, thick](3.5,-1)--(2.5,-1)--(2.5,-2)--(3.5,-2)--cycle;
\draw[dashed, color=mygreen] (2,0)--(5,2);
\draw[dashed, color=mygreen] (1,1)--(4,3);
\draw[dashed, color=mygreen] (2,0)--(5,-2);
\draw[dashed, color=mygreen] (1,-1)--(4,-3);

\node at (4.5,3)[label=south:$B_1$] {};
\node at (4.5,-3)[label=north:$B_2$] {};
\node at (0.9,0.5)[label=east:$A_1$] {};
\node at (0.9,-0.5)[label=east:$A_2$] {};

\end{tikzpicture}
\end{center}
\caption{On the left-hand side, the sets $D_{1/2}(A,B)$ and $M_{1/2}(A,B)$ differ. On the right-hand side, after partitioning $A$ and $B$, the two coincide.}
\label{fig:graffi_di_leone}
\end{figure}
\noindent 
In this case, one has
\begin{equation}
    \mathscr{L}^2\big(M_{1/2}(A,B)\big)>\mathscr{L}^2\big(D_{1/2}(A,B)\big).
\end{equation}
However, partitioning $A$ and $B$ as in the right-hand side of Figure \ref{fig:graffi_di_leone}, the problem is remedied, indeed  
\begin{equation}
\label{eq:BASKETS}
    M_t(A_1,B_1)=D_t(A_1,B_1) \quad \text{and} \quad   M_t(A_2,B_2)=D_t(A_2,B_2), 
\end{equation}
and now the strategy of the proof of Theorem \ref{thm:cdwithstep} can be employed without changes. The delicate issue is that, in general metric measure spaces, we are not able to find a suitable partition for arbitrary Borel sets $A$ and $B$, in such a way \eqref{eq:BASKETS} is verified (up to $\m$-null sets) for \emph{each element} of the partition.

% Filling the gap between $\bm(K,N)$ and $\sbm(K,N)$ amounts to understand the difference between (the measure of) the sets $M_t(A,B)$ and $D_t(A,B)$. In particular, we can not expect them to be equal for any couple of sets, also in elementary examples. Consider for instance the metric measure space $(\R^2,|\cdot|,\mathscr{L}^2)$ and the sets $A,B\in\R^2$ as in Figure \ref{fig:graffi_di_leone}. As one can easily check
% \begin{equation}
%     \mathscr{L}^2(M_{1/2}(A,B))>\mathscr{L}^2(D_{1/2}(A,B)),
% \end{equation}
% thus the $\sbm(K,N)$ inequality for $A$ and $B$ can not be directly deduced from $\bm(K,N)$ applied to $A$ and $B$. On the other hand, partitioning $A$ and $B$ as in the right-hand side of Figure \ref{fig:graffi_di_leone}, the problem is remedied, indeed  
% \begin{equation*}
%     M_t(A_1,B_1)=D_t(A_1,B_1) \quad \text{and} \quad   M_t(A_2,B_2)=D_t(A_2,B_2), 
% \end{equation*}
% and, from $\bm(K,N)$, we recover $\sbm(K,N)$ for the pairs $A_i,B_i$. Now, patching up the two 

% However, asymptotically one can hope to get a partition 

\bibliographystyle{alphaabbr}

\end{document}